\documentclass{amsart}

\usepackage{fullpage}
\usepackage{amssymb,amsmath,amsfonts, amscd}
\usepackage{array}
\usepackage{amsmath}
\usepackage{amssymb}

\newtheorem{thm}{Theorem}[section]

\newtheorem{cor}[thm]{Corollary}
\newtheorem{lem}[thm]{Lemma}
\newtheorem{defn}[thm]{Definition}
\newtheorem{prop}[thm]{Proposition}

\newenvironment{proofcite}[1]{\noindent{\bf Proof of #1.\,}}{\hfill$\Box$ \\}

\newcommand{\E}{{\mathcal{E}}}
\newcommand{\A}{{\mathcal{A}}}
\newcommand{\B}{{\mathcal{B}}}
\newcommand{\F}{{\mathcal{F}}}
\newcommand{\K}{{\mathcal{K}}}
\newcommand{\hh}{{\mathcal{H}}}
\newcommand{\N}{{\mathbb{N}}}
\newcommand{\p}{{\bf p}}
\newcommand{\bfd}{{\bf d}}
\newcommand{\textdef}{\textbf}
\newcommand{\dist}{{\rm dist}}
\newcommand{\forb}{{\rm Forb}}

\newcommand{\Aset}{\overleftrightarrow{\mathcal{A}}}
\newcommand{\Pset}{\mathcal{P}}
\newcommand{\nonarrow}{\bigcirc}
\newcommand{\unarrow}{-}

\begin{document}
\title{A version of Szemer\'edi's regularity lemma for multicolored graphs and directed graphs that is suitable for induced graphs}
\author{Maria Axenovich}
\address{Department of Mathematics, Iowa State University, Ames, Iowa 50011}
\email{axenovic@iastate.edu}
\thanks{This author's research partially supported by NSF grant DMS-0901008 and NSA grant H-98230-09-1-0063.}
\author{Ryan Martin}
\address{Department of Mathematics, Iowa State University, Ames, Iowa 50011}
\email{rymartin@iastate.edu}
\thanks{This author's research partially supported by NSF grant DMS-0901008 and by an Iowa State University Faculty Professional Development grant.}
\subjclass[2010]{Primary 05C35; Secondary 05C80}
\keywords{edit distance, hereditary properties, localization, split graphs, colored regularity graphs}

\maketitle

\begin{abstract}
In this manuscript we develop a version of Szemer\'edi's regularity lemma that is suitable for analyzing multicolorings of complete graphs and directed graphs.  In this, we follow the proof of Alon, Fischer, Krivelevich and M. Szegedy [\textit{Combinatorica} \textbf{20}(4) (2000) 451--476] who prove a similar result for graphs.

The purpose is to extend classical results on dense hereditary properties, such as the speed of the property or edit distance, to the above-mentioned combinatorial objects.
\end{abstract}

\section{Introduction}
We develop a version of Szemer\'edi's regularity lemma that is suitable for analyzing multicolorings of complete graphs and directed graphs. In proving our theorems we use as our guide the proof given by Alon, Fischer, Krivelevich and M. Szegedy~\cite{AFKSz} which proves a similar theorem in the case of graphs.  Their idea is, when given a graph, $G$, they find an induced subgraph $G'$ and two equipartitions, $\A$ of $V(G)$ and $\A'$ of $V(G')$.  The partitions $\A$ and $\A'$ have the same number of parts. Each part of $\A'$ is large and contained in some part of $\A$, each pairwise density of the parts in $\A'$ is close to the density of the corresponding pair in $\A$, but \textbf{all} pairs in $\A'$ are regular. Our goal is to find an induced copy of $H$ in $G$.  If enough of the pairs of parts in $\A$ have a sufficiently large density, we can apply the regularity lemma and Ramsey's theorem inside each of the parts of $\A'$.  A slicing lemma ensures that the resulting subclusters (we call them miniclusters) are ready to witness the embedding of a graph $H$.

In fact, this approach works for any combinatorial object that has a sufficiently similar type of regularity lemma.~\\

\noindent\textbf{Outline of the paper:} In the following subsections, we give basic definitions and the results we need for the graph version (Section~\ref{sec:graphs}), the multicolor version (Section~\ref{sec:multicol}) and the digraph version (Section~\ref{sec:digraph}).  Section~\ref{sec:main} gives the main result.  In Section~\ref{sec:proofs}, we prove our main results for multicolored graphs and for directed graphs simultaneously -- the main machinery depends very little on the combinatorial object to be studied.  In Section~\ref{sec:apps}, we apply our result to a specific problem related to edit distance.

\begin{defn}
A partition $\A=\{V_i : 1\leq i\leq k\}$ is an \textdef{equipartition} of a finite set if $|V_i|$ and $|V_{i'}|$ differ by at most 1 for all $1\leq i<i'\leq k$.  A \textdef{refinement} of $\A$ is a partition $\B=\{V_{i,j_i} : 1\leq i\leq k, 1\leq j_i\leq\ell_i\}$ such that $V_i=\bigcup_{j=1}^{\ell_i}A_{i,j_i}$ for $i=1,\ldots,k$.  The number of parts of a partition is its \textdef{order}.
\end{defn}

Just to ensure some technicalities, we prove that every equipartition can be refined into an equipartition.
\begin{prop}
Let $\A=\{V_i : 1\leq i\leq k\}$ be an equipartition of a finite set and let $\ell$ be a positive integer, $\ell\leq |V_i|$, $i=1,\ldots,k$. There exists a refinement of $\A$ into $k\ell$ parts that is an equipartition.
\end{prop}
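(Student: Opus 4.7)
The plan is to do the refinement part-by-part, as equally as possible within each $V_i$, and then check that the resulting sub-part sizes differ by at most $1$ across the whole partition.

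First I would set $n=|V|$ and write $n=qk+r$ with $0\le r<k$, so that, since $\A$ is an equipartition, $k-r$ of the $V_i$ have size $q$ and $r$ of them have size $q+1$. The hypothesis $\ell\le|V_i|$ guarantees $\ell\le q$. Let $a=\lfloor q/\ell\rfloor$ and $b=q-a\ell$, so $0\le b<\ell$. Within each $V_i$ I would split $V_i$ arbitrarily into $\ell$ pieces of sizes as equal as possible: for $|V_i|=q$, take $b$ pieces of size $a+1$ and $\ell-b$ pieces of size $a$; for $|V_i|=q+1$, use the sizes coming from writing $q+1=a\ell+(b+1)$, which gives either $b+1$ pieces of size $a+1$ and $\ell-b-1$ of size $a$ (when $b+1<\ell$), or $\ell$ pieces of size $a+1$ (when $b+1=\ell$, i.e.\ $q+1=(a+1)\ell$). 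Collecting these across all $i$ yields a refinement $\B$ of $\A$ into exactly $k\ell$ parts.

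The key observation is that in every one of the above cases, every sub-part has size either $a$ or $a+1$; hence any two parts of $\B$ differ in size by at most $1$, which is exactly the equipartition condition. The fact that $\B$ refines $\A$ is immediate from the construction, and counting the parts gives $k\cdot\ell$.

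The main (mild) obstacle is simply the floor/ceiling bookkeeping, in particular the edge case $b+1=\ell$ where a part of $\A$ of size $q+1$ divides evenly into $\ell$ pieces of size $a+1$; I would handle it as a separate subcase, as indicated above, to make sure no piece ever has size $a-1$ or $a+2$. Nothing deeper is required.
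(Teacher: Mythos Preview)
Your argument is correct and is essentially the same as the paper's: equipartition each $V_i$ into $\ell$ pieces and verify by floor/ceiling arithmetic that all resulting pieces have size $a$ or $a+1$. The paper phrases the check as showing $\lceil s/\ell\rceil-\lfloor (s-1)/\ell\rfloor\le 1$ where $s$ is the larger part size, but this is exactly your case analysis in different notation.
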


\begin{proof}
If all the $V_i$ are the same size, it is clear that equipartitioning each will result in the equipartition we seek. Suppose the sizes of each $V_i$ are $s$ and $s-1$ such that $s=q\ell+r$ for $r\in\{0,\ldots,\ell-1\}$.  It suffices to show $\lceil s/\ell\rceil$ and $\lfloor (s-1)/\ell\rfloor$ differ by at most one.

If $r\neq 0$, then $\lceil s/\ell\rceil=q+1$ and $\lfloor (s-1)/\ell\rfloor=q$.  If $r=0$, then $\lceil s/\ell\rceil=q$ and $\lfloor (s-1)/\ell\rfloor=q-1$.
\end{proof}

\subsection{Graph version}
\label{sec:graphs}
A graph $G$ is a pair $(V,E)$ where $V$ is a finite vertex set and $E\subseteq\binom{V}{2}$.

For disjoint vertex sets $V_i$, $V_j$, we denote $e(V_i,V_j)$ to be number of edges with one endpoint in $V_i$ and the other in $V_j$.  The \textdef{density} of $(V_i,V_j)$ is
$$ d(V_i,V_j):=\frac{e(V_i,V_j)}{|V_i||V_j|} . $$
The \textdef{density vector} of the pair $(V_i,V_j)$ is simply
$$ \bfd(V_i,V_j):=\left(d(V_i,V_j),1-d(V_i,V_j)\right) . $$

We say the pair $(V_i,V_j)$ is a \textdef{$\gamma$-regular pair} if  $V_i'\subset V_i$ and $V_j'\subset V_j$ such that both $|V_i'|\geq \gamma |V_i|$ and $|V_j'|\geq \gamma |V_j|$, then $|d(V_i',V_j')-d(V_i,V_j)|\leq\gamma$.

A partition $(V_1,\ldots,V_k)$ of the vertex set of $G$, a graph on $n$ vertices, is said to be a \textdef{$\gamma$-regular partition} if each of the following holds:
\begin{itemize}
   \item $\left||V_i|-|V_j|\right|\leq 1$ for all $i,j\in\{1,\ldots,k\}$.
   \item All but at most $\gamma k^2$ of the pairs $(V_i,V_j)$, $1\leq i<j\leq k$ are $\gamma$-regular.
\end{itemize}

A version of Szemer\'edi's lemma says the following:
\begin{thm}[Szemer\'edi~\cite{Sz}]\label{thm:reglem:graph}
   For every $m$ and $\epsilon>0$, there exists an integer $M=M(m,\epsilon)$ with the following property.

   If $G$ is a graph with $n\geq M$ vertices, and $\A$ is an equipartition of the vertex set of $G$ of order at most $m$, then there exists a refinement $\B$ of $\A$ of order $k$, where $m\leq k\leq M$, which is $\epsilon$-regular.
\end{thm}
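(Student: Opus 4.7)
The plan is the classical Szemer\'edi energy-increment argument, constrained so that the output refines the given equipartition $\A$. For a partition $\mathcal{P}=\{W_1,\ldots,W_k\}$ of $V(G)$, introduce the mean-square density
$$ \mathrm{ind}(\mathcal{P}) := \sum_{1\leq i<j\leq k} \frac{|W_i||W_j|}{n^2}\, d(W_i,W_j)^2, $$
which always lies in $[0,1/2]$. First I would verify two standard facts: (i) any refinement of $\mathcal{P}$ has index at least $\mathrm{ind}(\mathcal{P})$, by a Cauchy--Schwarz computation applied cell by cell; and (ii) if $(W_i,W_j)$ is not $\epsilon$-regular, witnessed by $W_i'\subseteq W_i$ and $W_j'\subseteq W_j$, then refining $W_i$ along $W_i'$ and $W_j$ along $W_j'$ raises the contribution of that pair to the index by at least $\epsilon^{4}|W_i||W_j|/n^{2}$.

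Combining these two facts, if $\mathcal{P}$ is not $\epsilon$-regular then at least $\epsilon k^{2}$ pairs misbehave, and the common refinement that simultaneously slices each part by all relevant witnesses has index exceeding $\mathrm{ind}(\mathcal{P})+\epsilon^{5}$. Starting from the initial partition $\A$ (first subdivided via the preceding proposition to have at least $m$ parts, if its order is strictly less than $m$), I iterate this refinement step. Since the index is bounded by $1/2$, after at most $\lceil 1/(2\epsilon^{5})\rceil$ rounds the partition must be $\epsilon$-regular. Each round multiplies the number of parts by at most $2^{k-1}$, so the final order is bounded by an iterated exponential in $m$ and $1/\epsilon$; that bound defines $M=M(m,\epsilon)$. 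Because every partition in the chain refines $\A$, so does the output $\B$.

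The remaining wrinkle is that the index-increment refinement is not automatically an equipartition. After each round I would reapply the preceding proposition, subdividing every part into equal pieces of a common small size $s$ and folding the few leftover vertices into an exceptional cell. A standard calculation (taking $s$ small enough relative to $\epsilon$, which is why $n$ must exceed $M$) shows that this equipartitioning costs at most an $\epsilon/2$ loss in the regularity parameter and a comparable loss in the index increment, and one compensates by running the main iteration with $\epsilon/2$ throughout. The principal obstacle I anticipate is precisely this bookkeeping: verifying that equipartitioning preserves enough of both the energy increment and the regularity structure, while at the same time keeping $\B$ a genuine refinement of $\A$ rather than merely a refinement of some intermediate subdivision. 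Both issues are handled by anchoring the refinement chain at $\A$ itself so that the refinement property is automatic at every stage, and by absorbing the rounding losses into constant factors in $\epsilon$.
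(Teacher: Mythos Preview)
The paper does not supply its own proof of Theorem~\ref{thm:reglem:graph}; it simply quotes the result from Szemer\'edi~\cite{Sz} and moves on. So there is nothing to compare your argument against here.

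That said, your outline is the standard energy-increment proof and is correct in substance. The index you define, the monotonicity under refinement via Cauchy--Schwarz, the $\epsilon^5$ energy gain from slicing along witnesses of irregularity, and the resulting tower-type bound on $M$ are exactly the classical ingredients. The one part you flag as a ``wrinkle''---restoring the equipartition after each refinement step while keeping $\B$ a refinement of $\A$---is indeed where most of the bookkeeping lives, but your plan (subdivide into equal pieces, absorb rounding errors into the constants) is the standard resolution and works. Note that the paper itself, when it later proves the analogous multicolour and digraph statements (Theorem~\ref{thm:reglem:multicol} and Theorem~\ref{thm:reglem:digraph}), also does not reprove the base regularity lemma but takes it as input; the energy-increment machinery reappears only in the proof of Lemma~\ref{lem:main}, where it is used at a higher level to compare two successive regular partitions rather than to build one from scratch.
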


There are two important lemmas cited by Alon, et al.~\cite{AFKSz} which permit discussion of graph embedding.  They have been presented and reproven many times, we give the statements here.  The titles ``Slicing lemma'' and ``Embedding lemma'' can be found in the literature.
\begin{lem}[Slicing lemma]\label{lem:slicing:graph}
   If $(A,B)$ is a $\gamma$-regular pair with density $\delta$ and $A'\subset A$ and $B'\subset B$ satisfy $|A'|\geq\epsilon |A|$ and $|B'|\geq\epsilon |B|$ for some $\epsilon\geq\gamma$, then $(A',B')$ is a $(\max\{2,\epsilon^{-1}\}\gamma)$-regular pair with density at least $\delta-\gamma$ and at most $\delta+\gamma$.
\end{lem}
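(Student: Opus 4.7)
The plan is to verify both conclusions (density closeness and regularity) by reducing everything to the single hypothesis that $(A,B)$ is $\gamma$-regular, applied to carefully chosen subsets.

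For the density bound, observe that the hypotheses $|A'|\geq\epsilon|A|\geq\gamma|A|$ and $|B'|\geq\epsilon|B|\geq\gamma|B|$ (using $\epsilon\geq\gamma$) are exactly what is needed to apply $\gamma$-regularity of $(A,B)$ to the pair $(A',B')$. This immediately gives $|d(A',B')-\delta|\leq\gamma$, so the density of $(A',B')$ lies in $[\delta-\gamma,\delta+\gamma]$.

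For the regularity bound, set $\gamma':=\max\{2,\epsilon^{-1}\}\gamma$ and take arbitrary $A''\subseteq A'$, $B''\subseteq B'$ with $|A''|\geq\gamma'|A'|$ and $|B''|\geq\gamma'|B'|$. The key calculation is to check that $A''$ and $B''$ are still large enough, as fractions of $A$ and $B$, to activate the $\gamma$-regularity of the original pair. Indeed,
\[
|A''|\;\geq\;\gamma'|A'|\;\geq\;\gamma'\epsilon|A|\;\geq\;\gamma|A|,
\]
where the last inequality holds in both cases: if $\epsilon\leq 1/2$, then $\gamma'\epsilon=\epsilon^{-1}\gamma\cdot\epsilon=\gamma$; if $\epsilon>1/2$, then $\gamma'\epsilon=2\gamma\epsilon\geq\gamma$. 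The same estimate applies to $|B''|$. So $\gamma$-regularity of $(A,B)$ yields $|d(A'',B'')-\delta|\leq\gamma$.

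Finally, combining this with the density bound already established and using the triangle inequality,
\[
|d(A'',B'')-d(A',B')|\;\leq\;|d(A'',B'')-\delta|+|d(A',B')-\delta|\;\leq\;2\gamma\;\leq\;\gamma',
\]
since $\gamma'\geq 2\gamma$ by definition. This is exactly the $\gamma'$-regularity condition for $(A',B')$. There is no real obstacle here; the only subtlety is choosing the max in the definition of $\gamma'$ so that simultaneously (i) $\gamma'\epsilon\geq\gamma$ (needed to invoke the original regularity) and (ii) $\gamma'\geq 2\gamma$ (needed for the triangle inequality to close), and the two cases above show that $\max\{2,\epsilon^{-1}\}\gamma$ is the minimal choice that achieves both.
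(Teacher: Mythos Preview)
Your proof is correct and follows essentially the same route as the paper's argument (given for the multicolor version, Lemma~\ref{lem:slicing:multicol}, since the graph case is stated without separate proof): apply $\gamma$-regularity of $(A,B)$ first to $(A',B')$ for the density bound, then to $(A'',B'')$ after checking $|A''|\geq\gamma' \epsilon|A|\geq\gamma|A|$, and close with the triangle inequality $2\gamma\leq\gamma'$. The only cosmetic difference is that the paper writes $\gamma'\epsilon=\max\{2\epsilon,1\}\gamma\geq\gamma$ in one line rather than splitting into the two cases $\epsilon\leq 1/2$ and $\epsilon>1/2$, and it prefaces the argument with the observation that one may assume $\gamma'<1$ since otherwise every pair is trivially $\gamma'$-regular.
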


\begin{lem}[Embedding lemma]\label{lem:embed:graph}
\newcommand{\gamembedgraph}{\gamma_{\ref{lem:embed:graph}}}
\newcommand{\delembedgraph}{\delta_{\ref{lem:embed:graph}}}
   For every $0<\eta<1$ and positive integer $k$ there exist $\gamma=\gamembedgraph(\eta,k)$ and $\delta=\delembedgraph(\eta,k)$ with the following property. \\
   \indent Suppose that $H$ is a graph with vertices $v_1,\ldots,v_k$, and that $V_1,\ldots,V_k$ is a $k$-tuple of disjoint vertex sets such that, for every $1\leq i<i'\leq k$, the pair $(V_i,V_{i'})$ is $\gamma$-regular, with density at least $\eta$ if $v_iv_{i'}$ is an edge of $H$ and with density at most $1-\eta$ if $v_iv_{i'}$ is not an edge of $H$.  Then, at least $\delta\prod_{i=1}^k|V_i|$ of the $k$-tuples $w_1\in V_1,\ldots,w_k\in V_k$ span (induced) copies of $H$ where each $w_i$ plays the role of $v_i$.
\end{lem}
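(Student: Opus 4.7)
The plan is a greedy vertex-by-vertex embedding. I select $w_1\in V_1$, then $w_2\in V_2$, and so on, maintaining at each stage a large pool of valid extensions. After placing $w_1,\dots,w_i$, define for each $j>i$ the candidate set $V_j^{(i)}\subseteq V_j$ consisting of those $w\in V_j$ for which $ww_s$ is an edge of $G$ whenever $v_jv_s\in E(H)$ and a non-edge otherwise, for every $s\le i$; set $V_j^{(0)}=V_j$. The key invariant to maintain is $|V_j^{(i)}|\ge c_i\,|V_j|$ for every $j>i$, where $c_0=1$ and $c_{i+1}=(\eta-2\gamma_i')c_i$ for parameters $\gamma_i'$ described below. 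The total number of induced embeddings will then be at least a constant fraction of $\prod_{i=1}^k c_{i-1}|V_i|$, which yields the claimed $\delta$.

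For the inductive step I would use the following standard typicality consequence of $\gamma$-regularity: in a $\gamma$-regular pair $(A,B)$ of density $d$, at most $\gamma|A|$ vertices $a\in A$ have $|N(a)\cap B|<(d-\gamma)|B|$ and at most $\gamma|A|$ have $|N(a)\cap B|>(d+\gamma)|B|$ (both facts follow by taking the deviant vertices as $A'$ and $B'=B$ in the definition of regularity). Now apply Lemma~\ref{lem:slicing:graph} to each pair $(V_{i+1},V_j)$ restricted to $(V_{i+1}^{(i)},V_j^{(i)})$: since both restrictions occupy at least a $c_i$-fraction of the originals, the restricted pair is $\gamma_i'$-regular with $\gamma_i':=\max\{2,c_i^{-1}\}\gamma$ and has density within $\gamma$ of the original, hence still at least $\eta-\gamma$ (if $v_{i+1}v_j\in E(H)$) or at most $1-\eta+\gamma$ (otherwise). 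Call $w\in V_{i+1}^{(i)}$ \emph{bad} if for some $j>i+1$ its neighborhood or non-neighborhood in $V_j^{(i)}$ has size less than $(\eta-2\gamma_i')|V_j^{(i)}|$. Typicality applied inside each of the $k-i-1$ restricted pairs gives at most $k\gamma_i'\,|V_{i+1}^{(i)}|$ bad vertices, so whenever $k\gamma_i'<1/2$ at least half of $V_{i+1}^{(i)}$ is available to play the role of $w_{i+1}$, and each such choice confirms the invariant at step $i+1$.

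The main obstacle is disentangling the cascade of parameters: the Slicing Lemma inflates $\gamma$ by a factor of $c_i^{-1}$, while $c_i$ itself is determined by the earlier $\gamma_j'$. A clean order of operations is to first fix a decreasing sequence $1=c_0>c_1>\dots>c_{k-1}>0$ by $c_{i+1}=(\eta/2)c_i$, so that $c_{k-1}=(\eta/2)^{k-1}$ depends only on $\eta$ and $k$, and then to choose $\gamma=\gamma_{\ref{lem:embed:graph}}(\eta,k)$ small enough that $\gamma_i'=\max\{2,c_i^{-1}\}\gamma\le \eta/(8k)$ for every $0\le i\le k-1$; concretely $\gamma\le \eta c_{k-1}/(8k)$ suffices. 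These choices guarantee $(\eta-2\gamma_i')c_i\ge c_{i+1}$ and $k\gamma_i'<1/2$, so the induction goes through. Multiplying the counts, the total number of valid ordered $k$-tuples $(w_1,\dots,w_k)$ is at least
\[
\prod_{i=1}^{k}\tfrac{1}{2}\,c_{i-1}\,|V_i| \;\ge\; \delta\prod_{i=1}^{k}|V_i|,
\]
with $\delta=\delta_{\ref{lem:embed:graph}}(\eta,k):=2^{-k}\prod_{i=0}^{k-1}c_i>0$ depending only on $\eta$ and $k$, as desired.
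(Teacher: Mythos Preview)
Your proof is correct and follows essentially the same approach as the paper, which establishes this lemma as the $r=2$ case of the multicolor Embedding Lemma (Lemma~\ref{lem:embed:multicol}): a greedy vertex-by-vertex embedding, using the Slicing Lemma to preserve regularity on the shrinking candidate sets and the standard typicality bound to discard a small fraction of bad vertices at each step. The only cosmetic differences are that the paper peels off $v_k$ and inducts downward (fixing $\gamma=\min\{(\eta/2)^{k-1},(1/6)^{k-1}\}$ at the outset and verifying it survives the cascade), whereas you embed forward from $v_1$ and first fix the shrinkage sequence $c_i=(\eta/2)^i$ before choosing $\gamma$ small enough; both bookkeeping schemes are standard and lead to the same conclusion.
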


\subsection{Multicolor graph version}
\label{sec:multicol}
We call an \textdef{$r$-graph} on $n$ vertices a pair $(V,c)$ where $V$ is a set of size $n$ and $c:\binom{V}{2}\rightarrow\{1,\ldots,r\}$ is a function known as the \textdef{coloring} of the edge set.

For disjoint vertex sets $V_i$, $V_j$ and a color $\rho\in\{1,\ldots,r\}$, we denote $e_{\rho}(V_i,V_j)$ to be number of edges with one endpoint in $V_i$ and the other in $V_j$ and with color $\rho$.  The \textdef{$\rho$-density} of $(V_i,V_j)$ is
$$ d_{\rho}(V_i,V_j):=\frac{e_{\rho}(V_i,V_j)}{|V_i||V_j|} . $$
The \textdef{density vector} of the pair $(V_i,V_j)$ is simply
$$ \bfd(V_i,V_j):=\left(d_1(V_i,V_j),\ldots,d_r(V_i,V_j)\right) . $$

We say the pair $(V_i,V_j)$ is a \textdef{$\gamma$-regular pair} if $V_i'\subset V_i$ and $V_j'\subset V_j$ such that both $|V_i'|\geq \gamma |V_i|$ and $|V_j'|\geq \gamma |V_j|$, then $|d_{\rho}(V_i',V_j')-d_{\rho}(V_i,V_j)|\leq\gamma$ for each $\rho\in\{1,\ldots,r\}$.  Equivalently, $\|\bfd(V_i',V_j')-\bfd(V_i,V_j)\|_{\infty}\leq\gamma$.

A partition $(V_1,\ldots,V_k)$ of the vertex set of $G$, an $r$-colored graph on $n$ vertices, is said to be a \textdef{$\gamma$-regular partition} if each of the following holds:
\begin{itemize}
   \item $\left||V_i|-|V_j|\right|\leq 1$ for all $i,j\in\{1,\ldots,k\}$.
   \item All but at most $\gamma k^2$ of the pairs $(V_i,V_j)$, $1\leq i<j\leq k$ are $\gamma$-regular.
\end{itemize}

The multicolor version of Szemer\'edi's lemma can be easily derived from a proof outline by Koml\'os and Simonovits~\cite{KS}:
\begin{thm}[Szemer\'edi~\cite{Sz}]\label{thm:reglem:multicol}
   Fix an integer $r\geq 2$.  For every $\epsilon>0$, and positive integer $m$, there exists an integer $CM=CM(m,\epsilon)$ with the following property.

   If $G$ is an $r$-graph with $n\geq CM$ vertices, and $\A$ is an equipartition of the vertex set of $G$ with an order not exceeding $m$, then there exists a refinement $\B$ of $\A$ of order $k$, where $m\leq k\leq CM$ which is $\epsilon$-regular.
\end{thm}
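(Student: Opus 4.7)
The plan is to run the standard energy-increment proof of Theorem~\ref{thm:reglem:graph} with the obvious multicolor replacement for the index, as indicated by the Koml\'os--Simonovits outline. For an equipartition $\Pset=\{V_1,\ldots,V_k\}$ of $V(G)$, define
\[
\mathrm{ind}(\Pset) \;:=\; \frac{1}{n^{2}}\sum_{1\le i<j\le k}|V_i|\,|V_j|\,\|\bfd(V_i,V_j)\|_{2}^{2}
\;=\; \frac{1}{n^{2}}\sum_{1\le i<j\le k}\sum_{\rho=1}^{r} |V_i|\,|V_j|\, d_\rho(V_i,V_j)^{2}.
\]
The entries of $\bfd(V_i,V_j)$ are nonnegative and sum to $1$ because every edge receives exactly one color, so $\|\bfd(V_i,V_j)\|_{2}^{2}\le \|\bfd(V_i,V_j)\|_{1}=1$. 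Consequently $\mathrm{ind}(\Pset)\le 1/2$, which is the boundedness that drives the iteration.

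I would then prove two lemmas that are almost verbatim their uncolored counterparts. First, monotonicity: any refinement $\Pset'$ of $\Pset$ satisfies $\mathrm{ind}(\Pset')\ge\mathrm{ind}(\Pset)$, by applying Jensen's inequality to each color coordinate separately and summing. Second, a quantitative increment: if $\Pset$ is not $\epsilon$-regular, then there exists a refinement $\Pset^{\star}$ of order at most $k\cdot 2^{k}$ with
\[
\mathrm{ind}(\Pset^{\star})\ge \mathrm{ind}(\Pset)+\tfrac{1}{8}\epsilon^{5}.
\]
The key observation is that non-regularity of a pair $(V_i,V_j)$ is already witnessed by a single color $\rho^{\star}$ and subsets $V_i'\subseteq V_i$, $V_j'\subseteq V_j$ of relative size $\ge\epsilon$ with $|d_{\rho^{\star}}(V_i',V_j')-d_{\rho^{\star}}(V_i,V_j)|>\epsilon$. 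Splitting $V_i$ and $V_j$ along these witnesses and applying the usual Cauchy--Schwarz calculation to the single coordinate $\rho^{\star}$ yields the standard $\epsilon^{4}|V_i|\,|V_j|/n^{2}$ gain from that pair, while monotonicity guarantees the contributions from the other colors do not decrease. Summing over the more than $\epsilon k^{2}$ irregular pairs and performing the refinements simultaneously (Venn-diagram intersection inside each $V_i$, costing at most a factor $2^{k}$ in the order) gives the claimed increment.

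Iteration then closes the argument: starting from $\A$, apply the increment step at most $4\epsilon^{-5}$ times; the order multiplies by at most $2^{k}$ per step, producing a tower-type bound for $CM(m,\epsilon)$. After each step I would use the proposition proved above, together with the Slicing Lemma (applied coordinatewise, which is valid because $\gamma$-regularity in the multicolor sense is exactly simultaneous $\gamma$-regularity in each color), to replace the intermediate Venn refinement by an equipartition of essentially the same order while keeping the regularity parameter under control. The only real obstacle is the standard bookkeeping needed to maintain the equipartition property under repeated refinement; the genuine ideas transfer verbatim from the graph case once one observes that a density vector on the probability simplex has squared $\ell_{2}$-norm at most $1$ and that one coordinate alone suffices to witness non-regularity.
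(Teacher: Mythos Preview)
The paper does not actually prove Theorem~\ref{thm:reglem:multicol}; it is stated as a known result, with the remark that it ``can be easily derived from a proof outline by Koml\'os and Simonovits~\cite{KS}'' and a one-paragraph comment on how to pass from the classical formulation (with an exceptional set) to the equipartition-refinement formulation. So there is no in-paper proof to compare your proposal against.

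Your sketch is exactly the standard energy-increment argument that the Koml\'os--Simonovits outline refers to, and the multicolor adaptations you identify are the right ones: the index is bounded by $1/2$ because the density vector lies on the simplex, refinement monotonicity holds coordinatewise, and a single color suffices to witness irregularity and hence to drive the Cauchy--Schwarz increment. The iteration and the resulting tower bound are correct.

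One small correction: the Slicing Lemma is not the tool for restoring the equipartition property after the Venn-diagram step. At intermediate stages of the iteration there is no regularity to slice; what you need instead is that further equipartitioning each atom (using the Proposition you cite) does not decrease the index, which is just another instance of the refinement-monotonicity you already stated. Alternatively, one keeps a small exceptional set throughout and redistributes it at the end, as the paper's remark after the theorem suggests. Either way the bookkeeping is routine, but invoking Lemma~\ref{lem:slicing:multicol} there is a misattribution.
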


The classical formulation of Szemer\'edi's regularity lemma provides only the existence of the $\epsilon$-regular partition.  However, its proof implies the more precise refinement result we state as Theorem~\ref{thm:reglem:multicol}. In addition, the classical formulation of the lemma allows for an exceptional set of size at most $\epsilon n$.  We can, however, apply the original formulation to the graph $G$ with a smaller parameter than $\epsilon$ and evenly distribute the vertices in the exceptional set among the other clusters to get the result with the given value of $\epsilon$.

Multicolored graphs have their own Slicing and Embedding lemmas:
\begin{lem}[Slicing lemma]\label{lem:slicing:multicol}
   If $(A,B)$ is a $\gamma$-regular pair in an $r$-graph such that $(A,B)$ has density vector $(d_1,\ldots,d_r)$ and $A'\subset A$ and $B'\subset B$ satisfy $|A'|\geq\epsilon |A|$ and $|B'|\geq\epsilon |B|$ for some $\epsilon\geq\gamma$, then $(A',B')$ is a $(\max\{2,\epsilon^{-1}\}\gamma)$-regular pair with density vector $\bfd=(A',B')$ such that $|d_{\rho}(A,B)-d_{\rho}(A',B')|\leq\gamma$ for each $\rho\in\{1,\ldots,r\}$ (equivalently, $\|\bfd(A,B)-\bfd(A',B')\|_{\infty}\leq\gamma$).
\end{lem}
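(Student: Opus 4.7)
The plan is to adapt the classical slicing proof for graphs to the multicolored setting by working coordinate-wise. Since the $L_\infty$ distance between density vectors equals the maximum, over colors $\rho\in\{1,\ldots,r\}$, of $|d_\rho(\cdot,\cdot)-d_\rho(\cdot,\cdot)|$, the definition of $\gamma$-regularity for the $r$-graph is really just the graph-case statement applied simultaneously in all $r$ coordinates. So the argument should mirror the single-color proof exactly, with the final step being the observation that a maximum of entrywise bounds obeys the triangle inequality.

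First I would dispose of the density claim. Because $(A,B)$ is $\gamma$-regular in the $r$-graph sense and we have $|A'|\geq\epsilon|A|\geq\gamma|A|$ and $|B'|\geq\epsilon|B|\geq\gamma|B|$, the hypothesis of $\gamma$-regularity fires directly on the pair $(A',B')$: for every color $\rho$, $|d_\rho(A',B')-d_\rho(A,B)|\leq\gamma$, which is exactly $\|\bfd(A',B')-\bfd(A,B)\|_\infty\leq\gamma$.

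Next I would verify that $(A',B')$ is $\gamma'$-regular for $\gamma':=\max\{2,\epsilon^{-1}\}\gamma$. Given arbitrary $A''\subset A'$ and $B''\subset B'$ with $|A''|\geq\gamma'|A'|$ and $|B''|\geq\gamma'|B'|$, I would translate these back to the original sets: since $\gamma'\geq\epsilon^{-1}\gamma$, we get
\[
 |A''|\geq\gamma'|A'|\geq(\epsilon^{-1}\gamma)(\epsilon|A|)=\gamma|A|,
\]
and symmetrically $|B''|\geq\gamma|B|$. Hence $A''$ and $B''$ qualify as a test pair for the $\gamma$-regularity of the original pair $(A,B)$, giving $\|\bfd(A'',B'')-\bfd(A,B)\|_\infty\leq\gamma$. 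Combining with the previous paragraph via the triangle inequality in $L_\infty$ yields
\[
 \|\bfd(A'',B'')-\bfd(A',B')\|_\infty\leq \|\bfd(A'',B'')-\bfd(A,B)\|_\infty+\|\bfd(A,B)-\bfd(A',B')\|_\infty\leq 2\gamma\leq\gamma',
\]
where the last inequality uses the $\max$ with $2$ in the definition of $\gamma'$.

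There is no real obstacle: this is the standard slicing argument run in parallel across colors. The only points to watch are bookkeeping — namely, confirming that the constant $\max\{2,\epsilon^{-1}\}$ simultaneously absorbs the factor $\epsilon^{-1}$ needed to pull $|A''|,|B''|$ back up to $\gamma|A|,\gamma|B|$ and the factor $2$ needed for the triangle inequality — and the observation that once everything is phrased as an $L_\infty$ bound on density vectors, no further per-color bookkeeping is required.
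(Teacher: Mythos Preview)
Your argument is correct and is essentially the same as the paper's: both first observe that $|A'|\geq\gamma|A|$, $|B'|\geq\gamma|B|$ give the density bound $\|\bfd(A,B)-\bfd(A',B')\|_\infty\leq\gamma$, then pull an arbitrary test pair $A''\subset A'$, $B''\subset B'$ back to a $\gamma$-fraction of $A,B$ and finish with the triangle inequality to get $2\gamma\leq\max\{2,\epsilon^{-1}\}\gamma$. The only cosmetic difference is that the paper also remarks that the case $\max\{2,\epsilon^{-1}\}\gamma\geq 1$ is trivially true.
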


\begin{proof}
Let $\eta=\max\{2,\epsilon^{-1}\}\gamma$.  We may assume $\eta<1$, otherwise the lemma is trivially true as all pairs are $\eta$-regular whenever $\eta\geq 1$.  In order to verify the regularity of $(A',B')$, choose $A''\subset A'$ and $B''\subset B'$ such that $|A''|\geq\eta|A'|$ and $|B''|\geq\eta|B'|$.  Consequently,
$$ |A''|\geq\eta|A'|\geq\eta\epsilon|A|=\max\{2\epsilon,1\}\gamma|A|\geq\gamma|A| $$
and similarly, $|B''|\geq\gamma|B|$.  By the $\gamma$-regularity of $(A,B)$, we know that $(A'',B'')$ has density vector $\bfd(A'',B'')$ such that $\|\bfd(A,B)-\bfd(A'',B'')\|_{\infty}\leq\gamma$.  Moreover, since $|A'|\geq|A''|\geq\gamma|A|$ and $|B'|\geq|B''|\geq\gamma|B|$, then  $\|\bfd(A,B)-\bfd(A',B')\|_{\infty}\leq\gamma$.  By the triangle inequality,
$$ \|\bfd(A',B')-\bfd(A'',B'')\|_{\infty}\leq \|\bfd(A,B)-\bfd(A',B')\|_{\infty}+\|\bfd(A,B)-\bfd(A'',B'')\|_{\infty}\leq 2\gamma\leq\eta . $$
The arbitrary choice of $A''$ and $B''$ means that $(A',B')$ is $\eta$-regular.
\end{proof}

\newcommand{\gamembedmulticol}{\gamma_{\ref{lem:embed:multicol}}}
\newcommand{\delembedmulticol}{\delta_{\ref{lem:embed:multicol}}}
\begin{lem}[Embedding lemma]\label{lem:embed:multicol}
   For every $0<\eta<1$ and positive integer $k$ there exist $\gamma=\gamembedmulticol(\eta,k)$ and $\delta=\delembedmulticol(\eta,k)$ with the following property. \\
   \indent Fix an integer $r\geq 2$. Suppose that $H=(\{v_1,\ldots,v_k\},c)$ is an $r$-graph. Let $G$ be an $r$-graph.  Let $V_1,\ldots,V_k$ be a $k$-tuple of disjoint vertex sets of $G$ such that for every $1\leq i<i'\leq k$ the pair $(V_i,V_{i'})$ is $\gamma$-regular, such that the density $d_{\rho}(V_i,V_{i'})\geq\eta$ if $v_iv_{i'}$ is an edge of $H$ with color $\rho$, for each $\rho$, $1\leq\rho\leq k$.  Then, at least $\delta\prod_{i=1}^k|V_i|$ of the $k$-tuples $(w_1,\ldots,w_k)$ with $w_1\in V_1,\ldots,w_k\in V_k$ span copies of $H$ where each $w_i$ plays the role of $v_i$.

   Note that the case of $r=2$ is the case of induced graphs in which edges are color 1 and nonedges are color 2.
\end{lem}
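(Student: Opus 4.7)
The plan is the standard iterated embedding argument, directly parallel to the proof of the graph version (Lemma~\ref{lem:embed:graph}), using the multicolor Slicing Lemma~\ref{lem:slicing:multicol} to carry regularity through restrictions in all $r$ colors simultaneously. Set $V_j^{(0)} := V_j$. For $i = 1, \ldots, k-1$, I will inductively choose $w_i \in V_i^{(i-1)}$ and define nested subsets $V_j^{(i)} \subseteq V_j^{(i-1)}$ for $j > i$ so that every vertex of $V_j^{(i)}$ is joined to each of $w_1,\ldots,w_i$ by the color that $H$ prescribes on $v_\ell v_j$. Any $w_k \in V_k^{(k-1)}$ then completes an embedding, so the count of valid ordered tuples is obtained by multiplying the number of admissible choices at each step.

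At step $i$, call $w \in V_i^{(i-1)}$ \emph{good} if, for every $j>i$, the color-$c(v_iv_j)$ neighborhood of $w$ inside $V_j^{(i-1)}$ has size at least $(\eta - k\gamma)\,|V_j^{(i-1)}|$. Applying Lemma~\ref{lem:slicing:multicol} inductively, each pair $(V_i^{(i-1)}, V_j^{(i-1)})$ remains $\gamma_{i-1}$-regular with $\gamma_{i-1} \leq \bigl(\max\{2, (\eta-k\gamma)^{-1}\}\bigr)^{i-1}\gamma$, and the color-$c(v_iv_j)$ density has shifted by at most $(i-1)\gamma$ from its original value of $\geq \eta$. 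Regularity immediately implies that at most $\gamma_{i-1}|V_i^{(i-1)}|$ vertices fail the size condition for any single $j$, so at most $(k-i)\gamma_{i-1}|V_i^{(i-1)}|$ vertices of $V_i^{(i-1)}$ are bad and the rest are good. Setting $V_j^{(i)}$ to be the color-$c(v_iv_j)$ neighborhood of the chosen $w_i$ inside $V_j^{(i-1)}$ and re-invoking Lemma~\ref{lem:slicing:multicol} produces the next regular layer of sets.

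The whole argument hinges on choosing $\gamma = \gamma_{\ref{lem:embed:multicol}}(\eta,k)$ small enough that, throughout all $k-1$ slicings: (i)~$\gamma_{i-1} \leq \eta - k\gamma$, so the Slicing Lemma's hypothesis $\epsilon \geq \gamma$ is met at every step; (ii)~the accumulated density shift keeps the relevant color-density at least $\eta - k\gamma > 0$; and (iii)~at least a constant fraction of the vertices at each step is good. A choice such as $\gamma \leq (\eta/4)^{k}/k$ comfortably achieves all three. Collecting the resulting lower bounds $|V_j^{(i)}| \geq (\eta - k\gamma)\,|V_j^{(i-1)}|$ with the good-fraction bounds yields a total count of at least
$$ \left[\prod_{i=1}^{k-1}\bigl(1 - k\gamma_{i-1}\bigr)\right](\eta - k\gamma)^{\binom{k}{2}}\prod_{i=1}^{k}|V_i|, $$
so one may take $\delta_{\ref{lem:embed:multicol}}(\eta,k) := (1 - k\gamma_{k-1})^{k-1}(\eta - k\gamma)^{\binom{k}{2}}$. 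The main obstacle is purely the parameter bookkeeping: once $\gamma$ is chosen so that the exponentially-inflated regularity parameter $\gamma_{k-1}$ still lies safely below $\eta$, each individual step is routine, and the multicolor case adds nothing beyond checking that the Slicing Lemma is applied with the $\|\cdot\|_{\infty}$ bound rather than a single density.
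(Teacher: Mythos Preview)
Your approach is essentially the same as the paper's: the standard greedy/inductive embedding, using the multicolor Slicing Lemma to pass regularity through successive neighborhood restrictions. The paper phrases it as a clean induction on $k$ (reducing from parameters $(\eta,k)$ to $(\eta-\gamma,k-1)$ after one step), whereas you unroll the same argument iteratively; these are the same proof.

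One bookkeeping slip to flag: your claim that ``the color-$c(v_iv_j)$ density has shifted by at most $(i-1)\gamma$'' is too optimistic. Each application of Lemma~\ref{lem:slicing:multicol} shifts the density by at most the \emph{current} regularity parameter, so after $i-1$ slicings the cumulative shift is bounded by $\gamma_0+\gamma_1+\cdots+\gamma_{i-2}$, not $(i-1)\gamma$. Since the $\gamma_\ell$ grow geometrically, your uniform threshold $\eta-k\gamma$ is not automatically a valid lower bound on the density (nor on the slicing proportion) at later steps. The paper's inductive framing sidesteps this by absorbing the shift into the new $\eta$ at each stage and re-invoking the lemma with fresh parameters; if you stick with the iterative version, you should either track $\eta-\sum_{\ell<i}\gamma_\ell$ explicitly or choose $\gamma$ small enough that $\sum_{\ell<k}\gamma_\ell<\eta/2$, which a bound like $\gamma\leq(\eta/4)^{k}/k$ does accommodate once you redo the arithmetic. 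This is a fixable detail, not a gap in the strategy.
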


\begin{proof}
We note that $r$ plays no role at all in the definitions of $\gamma$ and $\delta$.  This is because $\eta$ is the parameter that ensures the proper density for all colors.  We will choose $\gamembedmulticol(\eta,k)=\min\left\{(\eta/2)^{k-1},(1/6)^{k-1}\right\}$.

We proceed via induction on $k$ to determine the value of $\delembedmulticol(\eta,k)$.  The case of $k=1$ is trivial and $\delembedmulticol(\eta,1)=1$ for all $\eta$.  Let $k\geq 2$ and suppose there is such a function $\delembedmulticol(\eta,k-1)$.  Let
\begin{equation}\label{eq:gamma}
   \gamma=\min\left\{(\eta/2)^{k-1},(1/6)^{k-1}\right\} .
\end{equation}

Consider $V_k$.  Call a vertex $w_k\in V_k$ bad if, for some $i\in\{1,\ldots,k-1\}$, $w_k$ has less than $(\eta-\gamma)|V_i|$ edges of color $\rho=c(v_k,v_i)$ incident to it with the other endpoint in $V_i$.

Assume that more than $\gamma|V_k|$ vertices in $V_k$ are bad and let $V_k'$ be the set of bad vertices.  Then,
$d_{\rho}(V_k',V_i)<\frac{(\eta-\gamma)|V_k'||V_i|}{|V_k'||V_i|}=\eta-\gamma$. On the other hand $d_\rho(V_k,V_i)\geq\eta$.  So $|d_{\rho}(V_k',V_i)-d_{\rho}(V_k,V_i)|>\gamma$, contradicting the fact that $(V_k,V_i)$ is $\gamma$-regular.

Thus, the number of bad vertices is at most $\gamma|V_k|$.  Therefore, thare are at most $(k-1)\gamma|V_k|<|V_k|$ vertices that are bad with respect to some $V_i$, $1\leq i\leq k-1$. Let $w_k\in V_k$ be a vertex that is not bad with respect to each $V_i$. Let $\overline{V}_i\subset V_i$ be a set of $\left\lceil (\eta-\gamma)|V_i|\right\rceil$ vertices $w_i$ such that $w_iw_k$ has the correct color; i.e., the color of $v_iv_k$.

By the Slicing Lemma, each pair $(\overline{V}_i,\overline{V}_{i'})$ for $1\leq i<i'\leq k-1$ is $\left(\max\left\{2,(\eta-\gamma)^{-1}\right\}\gamma\right)$-regular.  The pairs also have that $d_{\rho}(V_i,V_{i'})\geq\eta-\gamma$ if $v_iv_{i'}$ is an edge of $H$ with color $\rho$.

In order to apply the inductive hypothesis, we must verify that
\begin{equation}\label{eq:gamcheck}
\max\left\{2,(\eta-\gamma)^{-1}\right\}\gamma \leq\gamembedmulticol(\eta-\gamma,k-1) =\min\left\{\left(\frac{\eta-\gamma}{2}\right)^{k-2}, \left(\frac{1}{6}\right)^{k-2}\right\} .
\end{equation}

If $\eta-\gamma\geq 1/2$, then (\ref{eq:gamma}) gives that $\gamma=(1/6)^{k-1}$ and (\ref{eq:gamcheck}) reduces to $2\gamma\leq (1/6)^{k-2}$, which is true for all $k$.

If $\eta-\gamma<1/2$ and $\eta-\gamma\geq 1/3$, then (\ref{eq:gamma}) gives that $\gamma\leq(1/6)^{k-1}$ and (\ref{eq:gamcheck}) reduces to
$$ \frac{\gamma}{\eta-\gamma}\leq\left(\frac{1}{6}\right)^{k-2} . $$
This is true because $\gamma/(\eta-\gamma)\leq 3\gamma=3 (1/6)^{k-1}=\frac{1}{2}(1/6)^{k-2}$.

If $\eta-\gamma<1/3$, since $\gamma\leq (\eta/2)^{k-1}$, (\ref{eq:gamcheck}) reduces to
\begin{eqnarray*}
   \frac{\gamma}{\eta-\gamma} & \leq & \left(\frac{\eta-\gamma}{2}\right)^{k-2} \\
   2^{k-2}\gamma & \leq & (\eta-\gamma)^{k-1} .
\end{eqnarray*}
To verify this, see that
$$ 2^{k-2}\gamma\leq 2^{k-2}(\eta/2)^{k-1}=\frac{1}{2}\eta^{k-1} $$
and that
$$ (\eta-\gamma)^{k-1} \geq\left(\eta-(\eta/2)^{k-1}\right)^{k-1} =\eta^{k-1}\left(1-\frac{\eta^{k-2}}{2^{k-1}}\right)^{k-1} \geq\eta^{k-1}\left(1-2^{1-k}\right)^{k-1} . $$
Some calculus shows that $(1-2^{-x})^x$ is increasing for $x\geq 1$ and so we have
$$ (\eta-\gamma)^{k-1}\geq\eta^{k-1}\left(1-2^{1-2}\right)^{2-1}=\frac{1}{2}\eta^{k-1} , $$
as needed.

Now that we have verified that we can use the inductive hypothesis, we do so and see that the number of copies of $H-v_k$ in $\left(\overline{V}_1,\ldots,\overline{V}_{k-1}\right)$ at least $\delembedmulticol(\eta-\gamma,k-1)\prod_{i=1}^{k-1}\left|\overline{V}_i\right|$.  So the total number of copies of $H$ is at least
\begin{eqnarray*}
\lefteqn{\delembedmulticol(\eta-\gamma,k-1)\prod_{i=1}^{k-1}\left|\overline{V}_i\right|\cdot\left(1-(k-1)\gamma\right)|V_k|} \\ & \geq & \delembedmulticol(\eta-\gamma,k-1)(\eta-\gamma)^{k-1}\left(1-(k-1)\gamma\right)\prod_{i=1}^k|V_i| . \end{eqnarray*}

With $\gamma=\min\left\{(\eta/2)^{k-1},(1/6)^{k-1}\right\}$, set $\delembedmulticol(\eta,k)=\delembedmulticol(\eta-\gamma,k-1)(\eta-\gamma)^{k-1}\left(1-(k-1)\gamma\right)$, the conditions of the Embedding Lemma are satisfied.
\end{proof}

\subsection{Directed graph version}
A \textdef{digraph} is defined to be a pair $(V,E)$ where $V$ is a labeled vertex set, $E\subseteq (V)_2$ and $(V)_2$ denotes the set $V\times V-\{(v,v) : v\in V\}$. It is convenient for us to view this as a coloring. That is, a digraph is a pair $(V,c)$ where $c:(V)_2\rightarrow\{\nonarrow,\unarrow,\leftarrow,\rightarrow\}$ is a function known as the \textdef{partial orientation} of the edge set.  It has the property that, for distinct $v,w$,
\begin{itemize}
   \item $c(v,w)=c(w,v)$ if and only if $c(v,w)\in\{\nonarrow,\unarrow\}$ and
   \item $c(v,w)=\rightarrow$ if and only if $c(w,v)=\leftarrow$.
\end{itemize}
For convenience, we denote $\Aset:=\{\nonarrow,\unarrow,\leftarrow,\rightarrow\}$.  Here we interpret the color $c(v,w)=\nonarrow$ to mean that neither $(v,w)$ nor $(w,v)$ are in $E$, the color $c(v,w)=\unarrow$ to mean that both $(v,w)$ and $(w,v)$ are in $E$ and the color $c(v,w)=\rightarrow$ to mean that $(v,w)\in E$ and $(w,v)\not\in E$.

In the directed case, we have the same notions of $\gamma$-regular pairs as in the multicolor case.  The \textdef{density vector} of the pair $(V_i,V_j)$ is somewhat similar as well:
$$ \bfd(V_i,V_j):=\left(d_{\nonarrow}(V_i,V_j),d_{\unarrow}(V_i,V_j),d_{\leftarrow}(V_i,V_j),d_{\rightarrow}(V_i,V_j)\right) . $$
However, in the directed case, the order makes a difference.  Although $d_{\rho}(A,B)=d_{\rho}(B,A)$ for $\rho\in\{\nonarrow,\unarrow\}$, it is also the case that $d_{\rightarrow}(A,B)=d_{\leftarrow}(B,A)$.




Alon and Shapira give the following version of Szemer\'edi's lemma:
\label{sec:digraph}
\begin{thm}[Alon-Shapira~\cite{ASh}]\label{thm:reglem:digraph}
   For every $\epsilon>0$ and positive integer $m$, there exists an integer $DM=DM(m,\epsilon)$ with the following property.

   If $G$ is a digraph $n\geq DM$ vertices, and $\A$ is an equipartition of the vertex set of $G$ with an order not exceeding $m$, then there exists a refinement $\B$ of $\A$ of order $k$, where $m\leq k\leq DM$ which is $\epsilon$-regular.
\end{thm}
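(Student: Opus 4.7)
The plan is to mimic the energy-increment proof of Szemer\'edi's regularity lemma, adjusting only to handle densities on \emph{ordered} pairs of clusters. Because all relevant data lives in the four-dimensional density vector $\bfd(V_i,V_j)\in[0,1]^4$ whose coordinates sum to $1$, no new analytic ingredient beyond the usual defect Cauchy-Schwarz is required. For a partition $\Pset=\{V_1,\ldots,V_k\}$ of $V(G)$ I would work with the index
\[
\text{ind}(\Pset)=\frac{1}{n^2}\sum_{i\neq j}|V_i||V_j|\sum_{\rho\in\Aset}d_\rho(V_i,V_j)^2,
\]
where $(i,j)$ ranges over all ordered pairs with $i\neq j$. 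From $\sum_{\rho}d_\rho(V_i,V_j)=1$ one gets $\sum_\rho d_\rho^2\leq 1$, and hence $0\leq\text{ind}(\Pset)\leq 1$.

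Next I would verify the two standard lemmas. First, refining a partition cannot decrease the index: applied color by color to each ordered block $(V_i,V_j)$, this is Jensen's inequality for $x\mapsto x^2$, using that $d_\rho(V_i,V_j)$ is the $|V_i^a||V_j^b|$-weighted average of $d_\rho(V_i^a,V_j^b)$ over sub-blocks. Second, the energy-increment step: if an ordered pair $(V_i,V_j)$ fails to be $\gamma$-regular, witnessed by subsets $V_i'\subseteq V_i$, $V_j'\subseteq V_j$ and some coordinate $\rho_0\in\Aset$, then splitting $V_i$ into $\{V_i',V_i\setminus V_i'\}$ and $V_j$ into $\{V_j',V_j\setminus V_j'\}$ boosts the contribution of $(V_i,V_j)$ to the index by at least a polynomial factor of $\gamma$ times $|V_i||V_j|/n^2$, via defect Cauchy-Schwarz applied to the coordinate $\rho_0$. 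The relations $d_\rho(V_i,V_j)=d_\rho(V_j,V_i)$ for $\rho\in\{\nonarrow,\unarrow\}$ and $d_{\rightarrow}(V_i,V_j)=d_{\leftarrow}(V_j,V_i)$ guarantee that the same refinement simultaneously handles the ordered pair $(V_j,V_i)$, so there is no double counting.

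The remainder is a routine iteration. If more than $\gamma k^2$ ordered pairs fail $\gamma$-regularity, then a simultaneous refinement along all the witnesses produces a partition with at most $k\cdot 2^{2k}$ parts whose index exceeds the previous one by a fixed polynomial in $\gamma$; an equitable sub-refinement then restores equipartition-ness at negligible index cost. Since $\text{ind}\leq 1$, this process halts after a bounded number of rounds with an $\gamma$-regular equipartition refining the initial $\A$. Setting $\gamma=\epsilon$ yields the theorem, with $DM(m,\epsilon)$ a tower function of $1/\epsilon$. The only genuine obstacle is the orientation bookkeeping — confirming that the defect-Cauchy-Schwarz gain on a single ordered pair survives summation over all four coordinates of $\bfd$, and that witnesses are not charged to both $(V_i,V_j)$ and $(V_j,V_i)$ — and this is resolved exactly as in the multicolor version, since for the purposes of the energy increment the four-dimensional density vector behaves identically to that of a $4$-colored graph.
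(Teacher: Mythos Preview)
The paper does not give its own proof of this statement; Theorem~\ref{thm:reglem:digraph} is simply quoted from Alon and Shapira~\cite{ASh} and used as a black box. Your proposal is the standard energy-increment argument adapted to the four-coordinate density vector, which is precisely the route taken in~\cite{ASh} and is the direct analogue of the multicolor proof the paper attributes to~\cite{KS}. The sketch is correct, and the orientation bookkeeping you flag is indeed the only point requiring care: because $d_{\rightarrow}(V_i,V_j)=d_{\leftarrow}(V_j,V_i)$ and $d_\rho(V_i,V_j)=d_\rho(V_j,V_i)$ for $\rho\in\{\nonarrow,\unarrow\}$, the quantity $\sum_{\rho\in\Aset}d_\rho(V_i,V_j)^2$ is symmetric in $i,j$, so your ordered-pair index is exactly twice the unordered-pair index the paper uses in the multicolor case, and the defect Cauchy--Schwarz gain on a single witness is not double-counted.

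One cosmetic remark: your index is normalized by $|V_i||V_j|/n^2$ rather than by $1/k^2$ as in the paper's Definition of $\ind(\A)$. Both work, but yours has the advantage of being monotone under \emph{arbitrary} refinements (not just equipartitions), which makes the ``equitable sub-refinement at negligible index cost'' step in your iteration cleaner than in the paper's normalization, where one has to remove a vertex from each large part and control the resulting density shifts explicitly (as the paper does in the proof of Lemma~\ref{lem:AFKSz:refine}).
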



Digraphs have their own Slicing and Embedding lemmas:
\begin{lem}[Slicing lemma]\label{lem:slicing:digraph}
   If $(A,B)$ is a $\gamma$-regular pair in a digraph such that $(A,B)$ has density vector $(d_{\nonarrow},d_{\unarrow},d_{\leftarrow},d_{\rightarrow})$ and $A'\subset A$ and $B'\subset B$ satisfy $|A'|\geq\epsilon |A|$ and $|B'|\geq\epsilon |B|$ for some $\epsilon\geq\gamma$, then $(A',B')$ is a $(\max\{2,\epsilon^{-1}\}\gamma)$-regular pair with density vector $\bfd':=(d_{\nonarrow}',d_{\unarrow}',d_{\leftarrow}',d_{\rightarrow}')$ such that $|d_{\rho}-d'_{\rho}|\leq\gamma$ for each $\rho\in\{\nonarrow,\unarrow,\leftarrow,\rightarrow\}$ (equivalently $\|\bfd(A,B)-\bfd(A',B')\|_{\infty}\leq\gamma$).
\end{lem}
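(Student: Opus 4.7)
The proof proposal is essentially to copy the proof of Lemma~\ref{lem:slicing:multicol} verbatim, observing that the argument never uses any property of the color set beyond the fact that $\gamma$-regularity is defined in terms of the $\ell_\infty$-distance of density vectors. The directional information $\{\nonarrow,\unarrow,\leftarrow,\rightarrow\}$ plays exactly the same role as the color set $\{1,\dots,r\}$ did before: it simply indexes the coordinates of $\bfd$.

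In more detail, set $\eta=\max\{2,\epsilon^{-1}\}\gamma$ and observe that if $\eta\ge 1$ the statement is vacuous, so we may assume $\eta<1$. To verify that $(A',B')$ is $\eta$-regular, pick arbitrary $A''\subset A'$ and $B''\subset B'$ with $|A''|\ge\eta|A'|$ and $|B''|\ge\eta|B'|$. Then
\[
|A''|\ge\eta|A'|\ge\eta\epsilon|A|=\max\{2\epsilon,1\}\gamma|A|\ge\gamma|A|,
\]
and similarly $|B''|\ge\gamma|B|$; also $|A'|\ge\gamma|A|$ and $|B'|\ge\gamma|B|$ by the hypothesis $\epsilon\ge\gamma$. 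Since $(A,B)$ is $\gamma$-regular in the digraph sense, both $\|\bfd(A,B)-\bfd(A',B')\|_\infty\le\gamma$ and $\|\bfd(A,B)-\bfd(A'',B'')\|_\infty\le\gamma$. The triangle inequality in $\ell_\infty$ then yields
\[
\|\bfd(A',B')-\bfd(A'',B'')\|_\infty\le 2\gamma\le\eta,
\]
so $(A',B')$ is $\eta$-regular. The coordinatewise density bound $|d_\rho-d'_\rho|\le\gamma$ for each $\rho\in\{\nonarrow,\unarrow,\leftarrow,\rightarrow\}$ is exactly $\|\bfd(A,B)-\bfd(A',B')\|_\infty\le\gamma$, which we already established above.

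There is no real obstacle: the only point that requires any care is noticing that the orientation convention $d_\rightarrow(A,B)=d_\leftarrow(B,A)$ discussed just before the lemma is compatible with the ordered-pair definition, so reading the density vector in a consistent order on both sides causes no issue. Everything else is formal and identical to the multicolor argument.
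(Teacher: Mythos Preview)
Your proposal is correct and matches the paper's own treatment exactly: the paper simply states that the proof is identical to the multicolor case (Lemma~\ref{lem:slicing:multicol}), which is precisely what you have reproduced. There is nothing to add.
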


The proof is identical to the multicolor case, Lemma~\ref{lem:slicing:multicol}.

\begin{lem}[Embedding lemma]\label{lem:embed:digraph}
\newcommand{\gamembeddigraph}{\gamma_{\ref{lem:embed:digraph}}}
\newcommand{\delembeddigraph}{\delta_{\ref{lem:embed:digraph}}}
   For every $0<\eta<1$ and positive integer $k$ there exist $\gamma=\gamembeddigraph(\eta,k)$ and $\delta=\delembeddigraph(\eta,k)$ with the following property. \\
   \indent Suppose that $H$ is a digraph with vertices $v_1,\ldots,v_k$, and that $V_1,\ldots,V_k$ is a $k$-tuple of disjoint vertex sets of $G$ such that for every $1\leq i<i'\leq k$ the pair $(V_i,V_{i'})$ is $\gamma$-regular, such that the density $d_{\rho}(V_i,V_{i'})\geq\eta$ if $(v_i,v_{i'})$ is an edge of $H$ with color $\rho$.  Then, at least $\delta\prod_{i=1}^k|V_i|$ of the $k$-tuples $(w_1,\ldots,w_k)$ with $w_1\in V_1,\ldots,w_k\in V_k$ span (induced) copies of $H$ where each $w_i$ plays the role of $v_i$.
\end{lem}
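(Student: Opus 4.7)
The plan is to run the induction proof of Lemma~\ref{lem:embed:multicol} essentially verbatim, regarding the digraph as a $4$-coloring of ordered pairs with palette $\Aset=\{\nonarrow,\unarrow,\leftarrow,\rightarrow\}$. The only real adjustment is bookkeeping: because the colors $\rightarrow$ and $\leftarrow$ are not symmetric in the two coordinates, we must pay attention to which side of each pair the embedded vertex $v_k$ lies on. I will set
\[
\gamma_{\ref{lem:embed:digraph}}(\eta,k)=\min\left\{(\eta/2)^{k-1},(1/6)^{k-1}\right\}
\]
and define $\delta_{\ref{lem:embed:digraph}}(\eta,k)$ by the same recursion used for $\delta_{\ref{lem:embed:multicol}}$, with base case $\delta_{\ref{lem:embed:digraph}}(\eta,1)=1$.

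For the inductive step, fix $k\ge 2$ and suppose we have $V_1,\dots,V_k$ satisfying the hypotheses. For each $i<k$, let $\rho_i\in\Aset$ denote the color of the edge from $v_i$ to $v_k$ in $H$, so by hypothesis $d_{\rho_i}(V_i,V_k)\ge\eta$. Call a vertex $w_k\in V_k$ \emph{bad with respect to $V_i$} if fewer than $(\eta-\gamma)|V_i|$ vertices $w_i\in V_i$ satisfy $c(w_i,w_k)=\rho_i$. If more than $\gamma|V_k|$ vertices of $V_k$ were bad with respect to a given $V_i$, the subset $V_k'$ of these bad vertices would satisfy $d_{\rho_i}(V_i,V_k')<\eta-\gamma$ while $d_{\rho_i}(V_i,V_k)\ge\eta$, contradicting $\gamma$-regularity of $(V_i,V_k)$. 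Hence at most $(k-1)\gamma|V_k|$ vertices are bad for some $i$, so at least $(1-(k-1)\gamma)|V_k|$ good choices of $w_k$ remain.

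Having fixed a good $w_k$, I let $\overline V_i\subseteq V_i$ consist of exactly $\lceil(\eta-\gamma)|V_i|\rceil$ vertices $w_i$ with $c(w_i,w_k)=\rho_i$ (these are candidates to play the role of $v_i$ with the correct edge to $w_k$). Since $(\eta-\gamma)\ge\gamma$ for the relevant range of parameters, the slicing lemma (Lemma~\ref{lem:slicing:digraph}) guarantees that each pair $(\overline V_i,\overline V_{i'})$ with $1\le i<i'\le k-1$ is $\bigl(\max\{2,(\eta-\gamma)^{-1}\}\gamma\bigr)$-regular, and the density vector shifts by at most $\gamma$ in each coordinate; in particular $d_\rho(\overline V_i,\overline V_{i'})\ge\eta-\gamma$ whenever the edge $(v_i,v_{i'})$ of $H$ has color $\rho$. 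The same elementary calculation carried out after (\ref{eq:gamcheck}) in the multicolor proof verifies that this slicing parameter is at most $\gamma_{\ref{lem:embed:digraph}}(\eta-\gamma,k-1)$, so the inductive hypothesis applies to $H-v_k$ on $(\overline V_1,\dots,\overline V_{k-1})$ and produces at least $\delta_{\ref{lem:embed:digraph}}(\eta-\gamma,k-1)\prod_{i=1}^{k-1}|\overline V_i|$ induced copies. Each extends through $w_k$ to an induced copy of $H$, and summing over good $w_k$ yields the lower bound
\[
\delta_{\ref{lem:embed:digraph}}(\eta-\gamma,k-1)\,(\eta-\gamma)^{k-1}\bigl(1-(k-1)\gamma\bigr)\prod_{i=1}^k |V_i|,
\]
which we take as $\delta_{\ref{lem:embed:digraph}}(\eta,k)\prod_{i=1}^k|V_i|$.

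The main thing to be careful about, rather than any real obstacle, is the directionality: when defining bad vertices I must test against the color of the \emph{ordered} pair $(w_i,w_k)$, not $(w_k,w_i)$, since $d_\rightarrow$ and $d_\leftarrow$ swap under the swap of coordinates. Once this is correctly set up, the rest of the argument is identical to the multicolor case and in particular the choice of $\gamma_{\ref{lem:embed:digraph}}$ and the recursion for $\delta_{\ref{lem:embed:digraph}}$ can be imported unchanged, since neither depends on the size of the color palette.
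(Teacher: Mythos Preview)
Your proposal is correct and follows exactly the approach the paper intends: the paper's ``proof'' of Lemma~\ref{lem:embed:digraph} simply says the argument is identical to the multicolor case (Lemma~\ref{lem:embed:multicol}), and what you have written is precisely that argument, with the only necessary care---tracking the ordered pair $(w_i,w_k)$ rather than $(w_k,w_i)$ when checking the color $\rho_i$---made explicit. The constants $\gamma_{\ref{lem:embed:digraph}}$ and $\delta_{\ref{lem:embed:digraph}}$ and the verification of~(\ref{eq:gamcheck}) carry over unchanged, as you note.
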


Again, the proof is identical to the multicolor case, Lemma~\ref{lem:slicing:multicol}.

\subsection{Main results}
\label{sec:main}
The statement of the main result (Theorem~\ref{thm:main}) can be made in general with the definitions above.  Recall that $\bfd(V_i,V_{i'})$ denotes the density vector of the pair $(V_i,V_{i'})$.
\begin{thm}[Alon, et al.~\cite{AFKSz}]\label{thm:main}
\newcommand{\Smain}{S_{\ref{thm:main}}}
\newcommand{\delmain}{\delta_{\ref{thm:main}}}
   Fix $r\geq 2$.  For every $m$ and function $\E$ with $\E:\N\rightarrow (0,1)$, there exist $S=\Smain(r,m,\E)$ and $\delta=\delmain(r,m,\E)$ with the following property: \\
   \indent If $G$ is a graph [$r$-graph, digraph] with $n\geq S$ vertices then there exist an equipartition $\A=\{V_i : 1\leq i\leq k\}$ of $G$ and an induced subgraph [induced $r$-subgraph, induced subdigraph] $G'$ of $G$, with an equipartition $\A'=\{V_i' : 1\leq i\leq k\}$ of the vertices of $G'$ that satisfy:
   \begin{itemize}
      \item $S\geq k\geq m$.
      \item $V_i'\subset V_i$ for all $i\geq 1$, and $|V_i'|\geq\delta n$.
      \item In the equipartition $\A'$, \textbf{all} pairs are $\E(k)$-regular.
      \item All but at most $\E(0)\binom{k}{2}$ of the pairs $1\leq i<i'\leq k$ are such that $\|\bfd(V_i,V_{i'})-\bfd(V_i',V_{i'}')\|_{\infty}<\E(0)$.
   \end{itemize}
\end{thm}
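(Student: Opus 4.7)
The plan is to adapt the Alon--Fischer--Krivelevich--Szegedy argument~\cite{AFKSz} by working with a vector-valued analogue of the mean-square-density index. For an equipartition $\mathcal{P}=\{V_i\}$ of $V(G)$ with $|V(G)|=n$, set
$$q(\mathcal{P}):=\sum_{i\neq j}\frac{|V_i||V_j|}{n^2}\,\bigl\|\bfd(V_i,V_j)\bigr\|_2^2,$$
so that $0\leq q(\mathcal{P})\leq 1$, using that each density vector has $\ell_1$-norm $1$. A standard convexity computation shows that $q$ is monotone under refinement, and that for any refinement $\mathcal{P}'$ of $\mathcal{P}$ with $V_i=\bigsqcup_a V_{i,a}$,
$$q(\mathcal{P}')-q(\mathcal{P})\;\geq\;\sum_{i\neq j}\sum_{a,b}\frac{|V_{i,a}||V_{j,b}|}{n^2}\,\bigl\|\bfd(V_{i,a},V_{j,b})-\bfd(V_i,V_j)\bigr\|_2^2.$$
This inequality is the bridge converting ``small $q$-increase'' into ``density vectors are preserved on subparts.''

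I iteratively construct a chain of equipartitions $\mathcal{P}_0\preceq\mathcal{P}_1\preceq\cdots$. Start with any $\mathcal{P}_0$ of order $k_0:=m$. Given $\mathcal{P}_{t-1}$ of order $k_{t-1}$, put $\gamma_t:=\min\{\E(k_{t-1}),\,1/(8k_{t-1}^2)\}$ and apply the appropriate regularity lemma (Theorem~\ref{thm:reglem:graph}, \ref{thm:reglem:multicol}, or~\ref{thm:reglem:digraph}) to obtain a $\gamma_t$-regular equipartition $\mathcal{P}_t$ refining $\mathcal{P}_{t-1}$, in which each $V_i\in\mathcal{P}_{t-1}$ is itself equipartitioned. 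Fix a small threshold $\alpha$ of order $\E(0)^3$ and stop at the first $T$ with $q(\mathcal{P}_T)-q(\mathcal{P}_{T-1})<\alpha$. Monotonicity and the bound $q\leq 1$ force $T\leq 1/\alpha$, so each $k_t$ is bounded by a finite iterate of the regularity-lemma function depending only on $r$, $m$, and $\E$; this yields the bound $S$.

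Now set $\A:=\mathcal{P}_{T-1}$ and $\mathcal{P}':=\mathcal{P}_T$, write $k:=k_{T-1}$, and let $\{V_{i,a}\}_a$ denote the subparts of $V_i$ in $\mathcal{P}'$, of which there are $\ell_i=\Theta(k_T/k)$. Select one subpart $V_i'\in\{V_{i,a}\}_a$ uniformly and independently at random for each $i$, and verify the conclusion via the probabilistic method. Let $Y$ count the pairs $(i,i')$ with $(V_i',V_{i'}')$ not $\gamma_T$-regular in $\mathcal{P}'$: each of the at most $\gamma_T\binom{k_T}{2}$ non-regular sub-pairs is selected with probability $1/(\ell_i\ell_{i'})=\Theta(k^2/k_T^2)$, giving $\mathbb{E}[Y]=O(\gamma_T k^2)<1/4$ by the choice of $\gamma_T$. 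For
$$Z:=\sum_{i<i'}\bigl\|\bfd(V_i,V_{i'})-\bfd(V_i',V_{i'}')\bigr\|_2^2,$$
independence of the choices and the displayed inequality yield $\mathbb{E}[Z]=O(k^2(q(\mathcal{P}')-q(\mathcal{P})))=O(k^2\alpha)$, and Markov gives $\Pr[Z>4\mathbb{E}[Z]]<1/4$. Hence with positive probability both $Y=0$ and $Z=O(k^2\alpha)$; the former forces every pair of $\A'$ to be $\gamma_T\leq\E(k)$-regular, while the latter, together with $\|\cdot\|_\infty\leq\|\cdot\|_2$, bounds the number of pairs with $\|\bfd(V_i,V_{i'})-\bfd(V_i',V_{i'}')\|_\infty\geq\E(0)$ by $O(k^2\alpha/\E(0)^2)\leq\E(0)\binom{k}{2}$, provided $\alpha$ is of order $\E(0)^3$. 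Because $\mathcal{P}'$ is equipartitioned inside each $V_i$, the sizes of the $V_i'$ differ by at most one, so $\A'$ is already an equipartition of $G':=G[\bigcup_i V_i']$; moreover $|V_i'|\geq n/(2k_T)\geq\delta n$ for $\delta:=1/(2S)$.

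The main obstacle is the coupled parameter balancing: $\gamma_t$ must shrink at each iteration in terms of the running order $k_{t-1}$ to satisfy both the union bound on non-regular sub-pairs and the desired final regularity $\E(k)$, yet the termination threshold $\alpha$ must depend \emph{only} on $\E(0)$ so that the process halts within $O(1/\alpha)$ steps and $k_T$ is bounded by a function of $r$, $m$, $\E$ alone. Crucially, the density-vector formalism and the associated regularity lemmas are uniform across the graph, multicolored-graph, and digraph settings, so this single argument delivers all three cases of Theorem~\ref{thm:main} simultaneously.
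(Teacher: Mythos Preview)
Your proposal is correct and follows essentially the same strategy as the paper: iterate the regularity lemma to build a chain of equi-refinements, stop at the first step where the mean-square-density index increases by less than a threshold depending only on $\E(0)$, and then select one subpart from each part by the probabilistic method so that all selected pairs are regular and most density vectors are preserved. The only noteworthy difference is that you work directly with the $\ell_2$ variance identity for $q(\cdot)$ to control $\mathbb{E}[Z]$, whereas the paper routes the same conclusion through the defect Cauchy--Schwarz inequality (Lemma~\ref{lem:defectCS}, Corollary~\ref{cor:defectCS}, Lemma~\ref{lem:AFKSz:refine}); this also lets you take the stopping threshold $\alpha$ of order $\E(0)^3$ independently of $r$, while the paper uses $\tfrac{1}{64}r\,\E(0)^4$.
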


Our contribution is to prove the case for multicolored graphs and digraphs.  Although the proof is quite similar to that of N. Alon, E. Fischer, M. Krivelevich and M. Szegedy~\cite{AFKSz}, there are subtleties that need to be addressed.

\section{Proof of the main results}
\label{sec:proofs}
There is a plethora of lemmas that are required to prove our main result.  Lemma~\ref{lem:defectCS} is a consequence of the defect form of the Cauchy-Schwarz Inequality which is stated without proof and can be found in~\cite{Sz}.  Corollary~\ref{cor:defectCS} is a direct consequence of Lemma~\ref{lem:defectCS}.  Lemma~\ref{lem:AFKSz:refine} is a refinement lemma that allows the induction to take place and Lemma~\ref{lem:main} is the main lemma, of which our main result, Theorem~\ref{thm:main} is a direct consequence.

First, we need a definition which, in the context of multicolorings of the complete graph, comes from \cite{KS}.
\newcommand{\ind}{{\rm ind}}
\begin{defn}
Given an equipartition $\A=\{V_i : 1\leq i\leq k\}$ of the vertex set of a multicolored graph [digraph], we define the \textdef{index of $\A$} as follows:
$$ \ind(\A)=\frac{1}{k^2}\sum_{\rho}\sum_{1\leq i<i'\leq k}d_{\rho}^2(V_i,V_{i'}) , $$
\end{defn}
where in the case of multicolored graphs, $\rho$ runs over all colors and in the case of digraphs, the colors $\rho$ run over the set of four ``colors'' in the set $\Aset=\{\nonarrow,\unarrow,\leftarrow,\rightarrow\}$.

Note also that $\ind(\A)=\frac{1}{k^2}\sum_{1\leq i<i'\leq k}\sum_{\rho}d_{\rho}^2(V_i,V_{i'})\leq \frac{1}{k^2}\sum_{1\leq i<i'\leq k}\left(\sum_{\rho}d_{\rho}(V_i,V_{i'})\right)^2\leq\frac{1}{2}$.

\begin{lem}\label{lem:defectCS}
For all sequences of nonnegative numbers $X_1,\ldots,X_n$, if for some $m$, $1\leq m<n$
$$ \sum_{k=1}^mX_k=\frac{m}{n}\sum_{k=1}^nX_k+\alpha , $$
then
$$ \sum_{k=1}^nX_k^2\geq\frac{1}{n}\left(\sum_{k=1}^nX_k\right)^2+\frac{\alpha^2n}{m(n-m)} . $$
(observe that $\alpha$ need not be positive).
\end{lem}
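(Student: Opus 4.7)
The plan is to split the sum into the two natural blocks singled out by the hypothesis and apply the standard Cauchy--Schwarz lower bound $\sum Y_k^2 \geq \frac{1}{N}\left(\sum Y_k\right)^2$ separately to each block; then substitute the hypothesis about $\alpha$ and collect terms.

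More concretely, write $S=\sum_{k=1}^n X_k$, $S_1=\sum_{k=1}^m X_k$, and $S_2=\sum_{k=m+1}^n X_k$. The hypothesis gives $S_1=\frac{m}{n}S+\alpha$, and therefore $S_2=S-S_1=\frac{n-m}{n}S-\alpha$. Applying Cauchy--Schwarz (equivalently, the fact that the variance of the $m$-term sequence $X_1,\ldots,X_m$ is nonnegative) to each block yields
\begin{equation*}
\sum_{k=1}^n X_k^2 \;=\; \sum_{k=1}^m X_k^2 + \sum_{k=m+1}^n X_k^2 \;\geq\; \frac{S_1^2}{m} + \frac{S_2^2}{n-m}.
\end{equation*}

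The remainder is a short algebraic computation. Substituting the expressions for $S_1$ and $S_2$ and expanding each squared binomial, the linear (in $\alpha$) cross terms are $\frac{2S\alpha}{n}$ and $-\frac{2S\alpha}{n}$, which cancel. The pure $S^2$ contribution collapses to $\frac{m}{n^2}S^2+\frac{n-m}{n^2}S^2=\frac{S^2}{n}$, and the pure $\alpha^2$ contribution is $\alpha^2\left(\frac{1}{m}+\frac{1}{n-m}\right)=\frac{\alpha^2 n}{m(n-m)}$. Combining the two pieces produces the claimed bound; note that the argument is insensitive to the sign of $\alpha$ since it enters only through a square.

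There is no real obstacle here; the only thing to be slightly careful about is that $\alpha$ may be negative, which is why the proof proceeds through the two one-sided Cauchy--Schwarz estimates rather than trying to single out an ``excess'' directly. The block split is forced by the hypothesis, which sharply distinguishes the first $m$ indices from the remaining $n-m$, and this is exactly the refinement of Cauchy--Schwarz that powers the $\alpha^2$-defect term.
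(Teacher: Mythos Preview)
Your proof is correct. The paper does not actually prove this lemma: it explicitly states that Lemma~\ref{lem:defectCS} ``is stated without proof and can be found in~\cite{Sz}.'' Your block-split application of Cauchy--Schwarz is the standard argument and fills this gap cleanly.
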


\begin{cor}\label{cor:defectCS}
Suppose that $A$ and $B$ are two disjoint sets of vertices of a multicolored graph [digraph] $G$, and $\{A_j : 1\leq j\leq\ell\}$ and $\{B_j : 1\leq j\leq\ell\}$ are their two respective partitions to sets of \textbf{equal} sizes, such that, for some color $\rho$, at least $\epsilon\ell^2$ of the possible $j,j'$ satisfy $|d_{\rho}(A,B)-d_{\rho}(A_j,B_{j'})|\geq\frac{1}{2}\epsilon$.  Then,
$$ \sum_{1\leq j,j'\leq\ell}d^2_{\rho}(A_j,B_{j'})>\ell^2\left(d^2_{\rho}(A,B)+\frac{1}{8}\epsilon^3\right) . $$
\end{cor}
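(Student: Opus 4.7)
The plan is to apply Lemma~\ref{lem:defectCS} to the $n := \ell^2$ subdensities $X_{(j,j')} := d_\rho(A_j, B_{j'})$, indexed by the pairs $(j,j')$. First I would observe that because every $A_j$ has a common size $|A|/\ell$ and every $B_{j'}$ has a common size $|B|/\ell$, the edge counts add up cleanly:
\begin{equation*}
\sum_{1 \leq j, j' \leq \ell} d_\rho(A_j, B_{j'}) \;=\; \frac{\ell^{2}}{|A||B|}\sum_{j,j'} e_\rho(A_j, B_{j'}) \;=\; \ell^{2} \, d_\rho(A, B),
\end{equation*}
so the average of the $X_{(j,j')}$ is exactly $d_\rho(A,B)$.

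Next I would extract a one-sided deviation. Among the at least $\epsilon \ell^{2}$ pairs with $|X_{(j,j')} - d_\rho(A,B)| \geq \epsilon/2$, at least $\epsilon \ell^{2}/2$ of them deviate in the same direction; let $J$ be such a set with $m := |J| \geq \epsilon \ell^{2}/2$, and without loss of generality suppose $X_{(j,j')} \geq d_\rho(A,B) + \epsilon/2$ on $J$ (the other case is symmetric). Relabel the $X$'s so that those in $J$ come first, i.e., are $X_1, \dots, X_m$. Then
\begin{equation*}
\sum_{k=1}^{m} X_k \;\geq\; m\left(d_\rho(A,B) + \tfrac{\epsilon}{2}\right) \;=\; \frac{m}{n}\sum_{k=1}^{n} X_k + \alpha, \qquad \text{with } \alpha \geq m\epsilon/2.
\end{equation*}

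Finally I would feed this into Lemma~\ref{lem:defectCS} to obtain
\begin{equation*}
\sum_{j,j'} d_\rho^{2}(A_j, B_{j'}) \;\geq\; \ell^{2}\, d_\rho^{2}(A,B) + \frac{\alpha^{2}\, \ell^{2}}{m(\ell^{2} - m)} \;\geq\; \ell^{2}\, d_\rho^{2}(A,B) + \frac{m\,\epsilon^{2}\, \ell^{2}}{4(\ell^{2} - m)}.
\end{equation*}
Since $m \geq \epsilon \ell^{2}/2$ while $\ell^{2} - m < \ell^{2}$, the error term strictly exceeds $\epsilon^{3} \ell^{2}/8$, which is exactly the claimed bound.

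There is no deep obstacle; the statement is essentially a bookkeeping application of the defect Cauchy--Schwarz inequality. The one point that requires care, and is the reason for the factor of $\tfrac{1}{8}$ rather than (say) $\tfrac{1}{4}$ in the conclusion, is the need to restrict to pairs whose deviation has a fixed sign before invoking Lemma~\ref{lem:defectCS}: if one kept all $\epsilon \ell^{2}$ deviating pairs, the positive and negative deviations could cancel and $\alpha$ could be small, washing out the gain.
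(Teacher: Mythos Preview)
Your proposal is correct and follows essentially the same route as the paper: split the deviating pairs by sign, apply Lemma~\ref{lem:defectCS} with $n=\ell^2$, $m\geq\tfrac{1}{2}\epsilon\ell^2$, and $|\alpha|\geq m\epsilon/2=\tfrac{1}{4}\epsilon^2\ell^2$, then bound $\ell^2-m<\ell^2$ to get the strict $\tfrac{1}{8}\epsilon^3$ gain. The only cosmetic difference is that the paper fixes $m=\tfrac{1}{2}\epsilon\ell^2$ exactly, whereas you take $m=|J|$; since the error term $m\epsilon^2\ell^2/(4(\ell^2-m))$ is increasing in $m$ (and $m<\ell^2$ is forced because the average of all $X_{(j,j')}$ equals $d_\rho(A,B)$), your version works just as well.
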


\begin{proofcite}{Corollary~\ref{cor:defectCS}}
Under the above conditions, either at least $\frac{1}{2}\epsilon\ell^2$ of the pairs $j,j'$ are such that $d_{\rho}(A_j,B_{j'})-d_{\rho}(A,B)\geq\frac{1}{2}\epsilon$, or at least $\frac{1}{2}\epsilon\ell^2$ are such that $d_{\rho}(A_j,B_{j'})-d_{\rho}(A,B)\leq -\frac{1}{2}\epsilon$.  We use Lemma~\ref{lem:defectCS} with $n=\ell^2$, $m=\frac{1}{2}\epsilon\ell^2$, and $\alpha$ satisfying $|\alpha|\geq\frac{1}{4}\epsilon^2\ell^2$.  Furthermore, we use the fact that all $|A_j|=|A|/\ell$ and all $|B_{j'}|=|B|/\ell$ to obtain
$$ \sum_{1\leq j,j'\leq\ell}d_{\rho}(A_j,B_{j'})=\ell^2 d(A,B) . $$

Applying Lemma~\ref{lem:defectCS} to the sequence $\left\{d_{\rho}(A_j,B_{j'})\right\}_{1\leq j,j'\leq\ell}$, we obtain
$$ \sum_{1\leq j,j'\leq\ell}d_{\rho}^2(A_j,B_{j'}) \geq \ell^2d_{\rho}^2(A,B) +\frac{\frac{1}{16}\epsilon^4\ell^6}{\frac{1}{2}\epsilon\ell^2(\ell^2-\frac{1}{2}\epsilon\ell^2)} > \ell^2\left(d_{\rho}^2(A,B)+\frac{1}{8}\epsilon^3\right) $$
as required.
\end{proofcite}

\begin{lem}\label{lem:AFKSz:refine}
Suppose that $\A=\{V_i : 1\leq i\leq k\}$ and its refinement $\B=\{V_{i,j} : 1\leq i\leq k, 1\leq j\leq\ell\}$ be vertex partitions of a graph $G$, satisfying $\ind(\B)-\ind(\A)\leq\frac{1}{64}r\epsilon^4$ for some $\epsilon$, and that the number of vertices of the graph is $n>512\epsilon^{-4}rk\ell$.  Then, for all possible $i<i'$ but at most $\epsilon\binom{k}{2}$ of them, $|d_{\rho}(V_i,V_{i'})-d_{\rho}(V_{i,j},V_{i',j'})|<\epsilon$ holds simultaneously for all $\rho$, for all but a maximum of $\epsilon\ell^2$ of the possible $j,j'$.
\end{lem}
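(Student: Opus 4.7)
I argue by contrapositive. Call a pair $(V_i,V_{i'})$ with $i<i'$ \emph{bad} if the conclusion fails for it: that is, if more than $\epsilon\ell^2$ of the $(j,j')\in[\ell]^2$ satisfy $|d_\rho(V_i,V_{i'})-d_\rho(V_{i,j},V_{i',j'})|\geq\epsilon$ for at least one color $\rho$. Assuming there are more than $\epsilon\binom{k}{2}$ bad pairs, I aim to conclude $\ind(\B)-\ind(\A)>\tfrac{1}{64}r\epsilon^4$, contradicting the hypothesis.

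For each bad pair, set
$$
  B_\rho(i,i'):=\{(j,j')\in[\ell]^2 : |d_\rho(V_i,V_{i'})-d_\rho(V_{i,j},V_{i',j'})|\geq\epsilon\}.
$$
Since $\bigcup_\rho B_\rho(i,i')$ has size more than $\epsilon\ell^2$ and there are at most $r$ colors (four in the digraph case), pigeonhole supplies a color $\rho=\rho(i,i')$ with $|B_\rho(i,i')|\geq\epsilon\ell^2/r$. Apply Corollary~\ref{cor:defectCS} to the pair $(V_i,V_{i'})$ for this color $\rho$ with parameter $\epsilon/r$, yielding
$$
  \sum_{j,j'}d_\rho^2(V_{i,j},V_{i',j'})\;>\;\ell^2\left(d_\rho^2(V_i,V_{i'})+\tfrac{1}{8}(\epsilon/r)^3\right).
$$
For every other pair and every other color, ordinary Cauchy--Schwarz (the standard monotonicity of the index under refinement) gives the weaker inequality $\sum_{j,j'}d_{\rho'}^2(V_{i,j},V_{i',j'})\geq\ell^2\, d_{\rho'}^2(V_i,V_{i'})$. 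Summing over all pairs and colors, the strict excesses from the more than $\epsilon\binom{k}{2}$ bad pairs accumulate; dividing by $(k\ell)^2$ and tracking the factors of $r$ produces the desired lower bound on $\ind(\B)-\ind(\A)$, contradicting the hypothesized upper bound $\tfrac{1}{64}r\epsilon^4$.

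The main technical obstacle is the equal-size requirement in Corollary~\ref{cor:defectCS}: its statement demands that the partitions $\{V_{i,j}\}_j$ and $\{V_{i',j'}\}_{j'}$ consist of parts of exactly the same cardinality, but the $V_{i,j}$ within a given $V_i$ need not have identical sizes. This is handled by a further subdivision of each $V_{i,j}$ into sub-pieces of a common, large size, whose existence and innocuousness for the density estimates is precisely what the hypothesis $n>512\epsilon^{-4}rk\ell$ secures. A secondary bookkeeping task is to balance the pigeonhole loss of a factor $1/r$ against the cubic $\epsilon^3$ scaling in Corollary~\ref{cor:defectCS} so that the accumulated index increment crosses the threshold $\tfrac{1}{64}r\epsilon^4$; this is the step requiring the most care.
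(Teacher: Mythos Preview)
Your overall plan---contrapositive, isolate bad pairs, harvest an index increment via Corollary~\ref{cor:defectCS} at bad pairs and ordinary convexity elsewhere---is the paper's. On the equal-size technicality the paper's fix is simpler than subdivision: since $\B$ is an equipartition the parts $V_{i,j}$ differ in size by at most one, so deleting a single vertex from each larger part yields sets $V_{i,j}'$ of identical size (and $V_i'=\bigcup_j V_{i,j}'$); the hypothesis $n>512\epsilon^{-4}rk\ell$ then guarantees every density moves by less than $\tfrac{1}{256}\epsilon^4$, so $\ind(\A),\ind(\B)$ shift by at most $\tfrac{r}{128}\epsilon^4$ and bad pairs remain bad with deviation exceeding $\epsilon/2$.

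The pigeonhole step, however, is a genuine gap and cannot be rescued by more careful bookkeeping. Feeding parameter $\epsilon/r$ into Corollary~\ref{cor:defectCS} gives a per-bad-pair gain of $\tfrac{1}{8}(\epsilon/r)^3\ell^2$, hence a total index increment of order $\epsilon^4 r^{-3}$; even bypassing the corollary and applying Lemma~\ref{lem:defectCS} directly with the full deviation $\epsilon$ and the reduced count $\epsilon\ell^2/r$ yields only order $\epsilon^4 r^{-1}$. Neither reaches the threshold $\tfrac{1}{64}r\epsilon^4$ once $r\geq 2$. The paper does not pigeonhole: it sums the increment over all colors at once. The reason this incurs no loss in $r$ is that, with equal-sized parts,
\[
\sum_{j,j'}d_\rho^2(V_{i,j}',V_{i',j'}')-\ell^2\, d_\rho^2(V_i',V_{i'}')
=\sum_{j,j'}\bigl(d_\rho(V_{i,j}',V_{i',j'}')-d_\rho(V_i',V_{i'}')\bigr)^2,
\]
so summing over $\rho$ \emph{before} summing over $(j,j')$, every bad $(j,j')$ contributes at least $(\epsilon/2)^2$ from its deviant color alone---there is no need to commit to a single $\rho$ for the whole pair $(i,i')$. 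Drop the pigeonhole and sum over colors directly.
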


\begin{proofcite}{Lemma~\ref{lem:AFKSz:refine}}
Supposing the contrary and assuming $\epsilon<1$ and $k>1$, we show that the index of $\B$ is larger than that of $\A$ by more than $\frac{1}{64}r\epsilon^4$.  If not all of the sets of $\B$ are of exactly the same size, let $V_{i,j}'$ be $V_{i,j}$ for sets of the smaller size and $V_{i,j}'$ be $V_{i,j}$ minus an arbitrarily chosen vertex for sets of the larger size.  Defining also $V_i'=\bigcup_{1\leq j\leq\ell}V_{i,j}'$, we define two new partitions $\B'=\{V_{i,j}' : 1\leq i\leq k, 1\leq j\leq\ell\}$ and $\A'=\{V_i' : 1\leq i\leq k\}$ of a large induced submulticolored graph [subdigraph] of $G$ (for each of these new partitions all its sets are of the same size).  The assumption on $n$ implies that $|d_{\rho}(V_i,V_{i'})-d_{\rho}(V_i',V_{i'}')|<\frac{1}{256}\epsilon^4$ and $|d_{\rho}(V_{i,j},V_{i',j'})-d_{\rho}(V_{i,j}',V_{i',j'}')| < \frac{1}{256}\epsilon^4$ hold for all $i,j,i',j',\rho$.  In particular, $|\ind(\A)-\ind(\A')|<\frac{1}{128}\epsilon^4$ and $|\ind(\B)-\ind(\B')|<\frac{1}{128}\epsilon^4$ hold, and for more than $\epsilon\binom{k}{2}$ of the possible $i<i'$, the inequality $|d_{\rho}(V_i',V_{i'}')-d_{\rho}(V_{i,j}',V_{i',j'}')| > \epsilon-\frac{2}{256}\epsilon^4>\frac{1}{2}\epsilon$ holds for at least $\epsilon\ell^2$ of the possible $j,j'$.  Using Corollary~\ref{cor:defectCS}, we obtain
\begin{eqnarray*}
   \ind(\B') & \geq & \frac{1}{k^2\ell^2} \sum_{\rho}\sum_{\scriptsize\begin{array}{c} 1\leq i<i'\leq k \\ 1\leq j,j'\leq\ell \end{array}}d_{\rho}^2(V_{i,j}',V_{i',j'}') \\
   & > & \frac{1}{k^2\ell^2}\sum_{\rho}\left(\ell^2\sum_{1\leq i<i'\leq k}d_{\rho}^2(V_{i}',V_{i'}')+\epsilon\binom{k}{2}\ell^2\frac{1}{8}\epsilon^3\right) \geq \ind(\A')+\frac{1}{32}r\epsilon^4 .
\end{eqnarray*}
This implies $\ind(\B)-\ind(\A)\geq\ind(\B')-\ind(\A')-\frac{2}{128}r\epsilon^4>\frac{1}{64}r\epsilon^4$, completing the proof.
\end{proofcite}

The main lemma is Lemma~\ref{lem:main}.
\newcommand{\Smain}{S_{\ref{lem:main}}}
\begin{lem}\label{lem:main}
Fix a positive integer $r$.  For every integer $m$ and function $\E$ with $\E:\N\rightarrow (0,1)$, there exists a number $S=\Smain(r,m,\E)$ with the following property.

If $G$ is an $r$-graph [digraph] with $n\geq S$ vertices, then there exists an equipartition $\A=\{V_i : 1\leq i\leq k\}$ and a refinement $\B=\{V_{i,j} : 1\leq i\leq k, 1\leq j\leq\ell\}$ of $\A$ that satisfy:
\begin{itemize}
   \item $|\A|=k\geq m$ but $|\B|=k\ell\leq S$.
   \item For all $1\leq i<i'\leq k$ but at most $\E(0)\binom{k}{2}$ of them, the pair $(V_i,V_{i'})$ is $\E(0)$-regular.
   \item For all $1\leq i<i'\leq k$ and all $1\leq j,j'\leq\ell$ but at most $\E(k)\ell^2$ of them, the pair $(V_{i,j},V_{i',j'})$ is $\E(k)$-regular.
   \item All $1\leq i<i'\leq k$ but at most $\E(0)\binom{k}{2}$ of them are such that for all $1\leq j,j'\leq\ell$ but at most $\E(0)\ell^2$ of them $|d_{\rho}(V_i,V_{i'})-d_{\rho}(V_{i,j},V_{i',j'})|<\E(0)$ holds for each $\rho\in\{1,\ldots,r\}$.
\end{itemize}
\end{lem}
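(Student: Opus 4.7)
The plan is to follow the AFKSz~\cite{AFKSz} iterated-refinement template: construct a sequence $\A_0 \prec \A_1 \prec \A_2 \prec \cdots$ of equipartitions in which, at each step, either the density-matching condition of the fourth bullet holds and we stop, or else the index $\ind$ strictly increases by a fixed amount. Since $\ind(\A) \leq 1/2$ holds universally, the process must terminate after a bounded number of steps.

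First I would apply Theorem~\ref{thm:reglem:multicol} (in the multicolor case) or Theorem~\ref{thm:reglem:digraph} (in the digraph case) to $G$ with parameters $(m, \E(0))$, obtaining an initial equipartition $\A_0$ of $V(G)$ of order $k_0 \geq m$ that is $\E(0)$-regular. This immediately establishes the first two bullets of the conclusion for any candidate coarser partition $\A := \A_t$ extracted from the iteration.

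Given $\A_t$ of order $k_t$, I would apply the same regularity lemma to $G$, now with $\A_t$ as the input equipartition and parameters $(k_t, \E(k_t))$, producing a refinement $\A_{t+1}$ of order $k_t \ell_t \geq k_t$ that is $\E(k_t)$-regular, so the third bullet holds for $\B := \A_{t+1}$. I then check whether the fourth bullet also holds for $\A := \A_t$, $\B := \A_{t+1}$. If so, output this pair and halt. Otherwise, more than $\E(0)\binom{k_t}{2}$ of the indices $i<i'$ violate the density comparison for some color $\rho$, and Lemma~\ref{lem:AFKSz:refine} applied with $\epsilon = \E(0)$ (its contrapositive) forces
$$ \ind(\A_{t+1}) - \ind(\A_t) > \frac{1}{64} r \E(0)^4 . $$
Combined with $\ind \leq 1/2$, this guarantees termination within $T := \lceil 32 r^{-1} \E(0)^{-4} \rceil$ iterations, and the pair $(\A_t,\A_{t+1})$ at the terminating step satisfies all four bullets simultaneously.

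The main obstacle is the quantitative bookkeeping. Writing $\Phi$ for either of $CM$ or $DM$ depending on the setting, and setting $S_0 := m$ and $S_{t+1} := \Phi(S_t, \E(S_t))$, the iteration's final order is at most $S_T$, a finite (tower-type) function of $r$, $m$, and $\E$. The constant $S$ must in addition dominate $513 \, r \, \E(0)^{-4} S_T$, in order that the size hypothesis $n > 512 \, r \, \E(0)^{-4} k_t \ell_t$ of Lemma~\ref{lem:AFKSz:refine} be met at every stage. A minor technicality is that Szemer\'edi-refining an equipartition $\A_t$ into an equipartition may perturb part sizes by a single vertex per part; this alters densities by only $O(1/k_t)$ and is absorbed into the slack of $\E(0)$.
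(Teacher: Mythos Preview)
Your overall architecture matches the paper's: build a chain of Szemer\'edi refinements $\A_0,\A_1,\ldots$, invoke Lemma~\ref{lem:AFKSz:refine} to tie failure of the fourth bullet to an index increment of at least $\frac{1}{64}r\E(0)^4$, and use $\ind\leq\frac{1}{2}$ to terminate after boundedly many steps. The paper phrases the stopping rule as ``first $i$ with $\ind(\A_i)-\ind(\A_{i-1})\leq\frac{1}{64}r\epsilon^4$'' and then applies Lemma~\ref{lem:AFKSz:refine} forward, while you check the fourth bullet directly and apply the contrapositive; the two are equivalent.

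There is, however, a genuine gap in your choice of regularity parameter at each refinement. You refine $\A_t$ with parameter $\E(k_t)$, obtain $\A_{t+1}$ of order $k_t\ell_t$ that is $\E(k_t)$-regular, and then assert that the third bullet holds. But $\E(k_t)$-regularity of a partition of order $k_t\ell_t$ only bounds the number of irregular pairs by $\E(k_t)(k_t\ell_t)^2$, whereas the third bullet demands at most $\E(k_t)\ell_t^2$ irregular pairs among all the $(V_{i,j},V_{i',j'})$: you are off by a factor of $k_t^2$. The paper handles this by refining with the much smaller parameter $2\E(T^{(i-1)})\bigl(T^{(i-1)}\bigr)^{-2}$ (here $T^{(i-1)}$ is a precomputed upper bound on $|\A_{i-1}|$), so that the total count of irregular pairs in $\B$ is at most $2\E(k)k^{-2}\binom{k\ell}{2}<\E(k)\ell^2$. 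You need the same $k_t^{-2}$ correction in the parameter, together with the corresponding adjustment $S_{t+1}:=\Phi\bigl(S_t,\,2\E(S_t)S_t^{-2}\bigr)$ in your tower recursion; with that fix your argument goes through.
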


\begin{proof}
We may assume that $m>1$ and that $\E(\kappa)$ is monotone nonincreasing.  For convenience, let $\epsilon=\E(0)$.

If we are in the case of a multicolored graph, fix a positive integer $r$, and using the function $CM$ from Theorem~\ref{thm:reglem:multicol}, let
$$ T^{(1)}=CM(r,m,\epsilon) $$
and for $i>1$, we define by induction
$$ T^{(i)}=CM(r,T^{(i-1)},2\E(T^{(i-1)})(T^{(i-1)})^{-2}) . $$

If we are in the case of a digraph, and using the function $DM$ from Theorem~\ref{thm:reglem:digraph}, let
$$ T^{(1)}=DM(m,\epsilon) $$
and for $i>1$, we define by induction
$$ T^{(i)}=DM(T^{(i-1)},2\E(T^{(i-1)})(T^{(i-1)})^{-2}) . $$

In either case, we show that $S=512r\epsilon^{-4}T^{(64r\epsilon^{-4}+1)}$ satisfies the required property.

Given $G$, define $\A_1$ to be an equipartition of order at least $m$ but not greater than $T^{(1)}$, such that all pairs but at most $\epsilon\binom{|\A_1|}{2}$ of them are $\epsilon$-regular.  Define by induction for $i>1$ the equipartition $\A_i$ to be a refinement of $\A_{i-1}$, of order not greater than $T^{(i)}$ such that all of the pairs but at most
$$ 2\E(T^{(i-1)})(T^{(i-1)})^{-2}\binom{|\A_i|}{2}\leq 2\E(T^{(i-1)})(|\A_{i-1}|)^{-2}\binom{|\A_i|}{2} $$
are $2\E(T^{(i-1)})(T^{(i-1)})^{-2}<\E(T^{(i-1)})$-regular.  The refinements are guaranteed by the original regularity lemma, either Theorem~\ref{thm:reglem:multicol} (in the multicolor case) or Theorem~\ref{thm:reglem:digraph} (in the digraph case).

Let us now choose the minimum $i$ such that $\ind(\A_i)-\ind(\A_{i-1})\leq\frac{1}{64}r\epsilon^4$.  There certainly exists such an $1<i\leq 64r^{-1}\epsilon^{-4}+1$ since the indices of each partition in the series are all between $0$ and $1$.  We set $\A=\A_{i-1}$ and $\B=\A_i$, and appropriately $k=|\A_{i-1}|=|\A|$ and $l=k^{-1}|\A_i|=|\A|^{-1}|\B|$.  We claim that $\A$ and $\B$ are the required partitions.

It is clear that $\B$ is a refinement of $\A$ and that they both satisfy the requirements with regards to their respective orders.  It is also clear (by the assumption $\E(\kappa)\leq\E(0)=\epsilon$) that $\A$ satisfies the requirement regarding the regularity of its pairs.  Since all but at most $2\E(k)k^{-2}\binom{k\ell}{2}<\E(k)\ell^2$ of all the pairs of $\B$ are $\E(k)$-regular, the condition regarding the regularity of pairs of $\B$ in the formulation of the lemma follows.  Finally, Lemma~\ref{lem:AFKSz:refine} shows that most densities of the pairs of $\B$ differ from the corresponding densities of the pairs of $\A$ by less than $\epsilon$, as in the formulation of the last condition of this lemma.
\end{proof}

\begin{proofcite}{Theorem~\ref{thm:main}}
We may assume $\E(\kappa)\leq\E(0)$.  Set $\epsilon=\E(0)$.  Define $\E'$ by setting $\E'(\kappa)\linebreak=\min\left\{\E(\kappa),\frac{1}{4}\epsilon,\frac{1}{2}\binom{\kappa+2}{2}^{-1}\right\}$, set $S=\Smain(r,m,\E')$ and $\delta=\frac{1}{2}(\Smain(m,\E'))^{-1}$. Use Lemma~\ref{lem:main} on $G$, finding the appropriate partitions $\A=\{V_i : 1\leq i\leq k\}$ and $\B=\{V_{i,j} : 1\leq i\leq k, 1\leq j\leq\ell\}$.

Now choose randomly, independently and uniformly $j_i$ such that $1\leq j_i\leq\ell$ for each $1\leq i\leq k$.  With probability more than $1/2$, all the pairs $\left(V_{i,j_i},V_{i',j_{i'}}\right)$ are $\E'(k)$-regular.  In fact, the probability that there is some pair that is not $\E(k)$-regular is at most $\E(k)'\binom{k}{2}$.

Moreover, the expected number of pairs $1\leq i\leq i'\leq k$ for which $|d_{\rho}(V_i,V_{i'})-d_{\rho}(V_{i,j_i},V_{i',j_{i'}})|\geq\epsilon$ for some $\rho$ is no more than $\frac{1}{4}\epsilon\binom{k}{2}+\frac{1}{4}\epsilon\binom{k}{2}=\frac{1}{2}\epsilon\binom{k}{2}$, by the choice of $\E'$, so with probability at least $1/2$, no more than $\epsilon\binom{k}{2}$ of the pairs satisfy this.

Therefore, there exists a choice of $j_1,\ldots,j_k$ such that all pairs $\left(V_{i,j_i},V_{i',j_{i'}}\right)$ are $\E(k)$-regular, and all but at most $\epsilon\binom{k}{2}$ of them satisfy $|d_{\rho}(V_i,V_{i'})-d_{\rho}(V_i',V_{i'}')|<\epsilon$ for all $\rho\in\{1,\ldots,r\}$. Defining $G'$ as the induced subgraph spanned by $\bigcup_{1\leq i\leq k}V_{i,j_i}$, and $\A'$ by setting $V_i=V_{i,j_i}$ achieves the required result.
\end{proofcite}

\section{Application}
\label{sec:apps}
An important feature of editing is the notion of the palette.  Colloquially, the \textdef{palette} is the set of colors to which an edge can be changed. For an $r$-graph, the palette is always the set $\{1,\ldots,r\}$.  Note that if $r=2$, this is the case of simple graphs. So, we will not define the palette for $r$-graphs, only focusing on it for digraphs.

\begin{defn}
In the case of digraphs, we say that $\Pset\subseteq\Aset$ is a \textdef{palette} if either none or both of ``$\rightarrow$'' and ``$\leftarrow$'' are in $\Pset$ and every digraph is a pair $(V,c)$ where $V$ is a vertex set and $c:(V)_2\rightarrow\Pset$ is a coloring of the edge set of a complete graph on $|V|$ vertices. There are 5 possible nontrivial palettes:
\begin{enumerate}
\setcounter{enumi}{-1}
\item $\Pset_0=\Aset$ is the most general case. \label{it:palette:A}
\item $\Pset_1=\{\unarrow,\leftarrow,\rightarrow\}$ is the case of simple digraphs such that every pair of vertices has at least one arc between them. \label{it:palette:noempty}
\item $\Pset_2=\{\nonarrow,\leftarrow,\rightarrow\}$ is the case of \textdef{oriented graphs}; that is, no pair of vertices has two arcs between them. \label{it:palette:nounarrow}
\item $\Pset_3=\{\nonarrow,\unarrow\}$ is the case of simple, undirected graphs.  \label{it:palette:undir}
\item $\Pset_4=\{\leftarrow,\rightarrow\}$ is the case of \textdef{tournaments}. \label{it:palette:tourn}
\end{enumerate}
\end{defn}

Recall that the vector is of the form $(p,q)$ where $p,q\geq 0$ and $0\leq 1-p-2q$.  In the cases in which the palette is not $\Aset$, the relevant density vector must be further restricted.
\begin{itemize}
   \item[(\ref{it:palette:noempty})] In the case of $\Pset_1=\{\unarrow,\leftarrow,\rightarrow\}$, then $p+2q=1$.
   \item[(\ref{it:palette:nounarrow})] In the case of $\Pset_2=\{\nonarrow,\leftarrow,\rightarrow\}$, then $p=0$ and $q\leq 1/2$.
   \item[(\ref{it:palette:undir})] In the case of $\Pset_3=\{\nonarrow,\unarrow\}$, then $q=0$ and $p\leq 1$. This is the $r$-graph case where $r=2$ or simply the case of undirected graphs.  See~\cite{AS} and~\cite{AKM}.
   \item[(\ref{it:palette:tourn})] In the case of $\Pset_4=\{\leftarrow,\rightarrow\}$, then $p=0$ and $1-p-2q=0$, so $q=1/2$.
\end{itemize}

Our application is one of edit distance and it shows that $r$-types [dir-types] are used to lower bound the edit distance function. It turns out that, trivially, they upper bound the edit distance function.

\begin{defn}
An \textdef{$r$-type}, $K$, is a pair $(U,\phi)$, where $U$ is a finite set of vertices and $\phi : U\times U \rightarrow 2^{\{1,\ldots,r\}}\setminus \emptyset$, such that $\phi(x,y)=\phi(y,x)$ and $\phi(x,x)\neq\{1,\ldots,r\}$, for all $x, y\in U$. Informally, we will view an $r$-type as a complete graph with a coloring of both vertices and edges using subsets of $\{1,\ldots,r\}$. The \textdef{sub-$r$-type} of $K$ induced by $W\subseteq U$ is the $r$-type achieved by deleting the vertices $U-W$ from $K$.

We say that an $r$-graph $H=(V,c)$ of a complete graph \textdef{embeds in type $K=(U,\phi)$}, and write $H\mapsto K$, if there is a map $\gamma:V\rightarrow U$ such that $c(\{v,v'\})=c_0$ implies $c_0\in\phi(\gamma(v),\gamma(v'))$.
\end{defn}

Types are defined in a slightly different way for digraphs.
\begin{defn}
Let $\Pset\subseteq\Aset$ be a palette. A \textdef{$\Pset$-dir-type} or simply \textdef{dir-type} where the palette is understood, $K$, is a pair $(U,\phi)$, where $U$ is a finite set of vertices and $\phi : U\times U \rightarrow 2^{\Pset}\setminus \emptyset$, such that
\begin{enumerate}
\item for distinct $x,y$ and $\rho\in\{\nonarrow,\unarrow\}$, $\phi(x,y)\ni\rho$ if and only if $\phi(y,x)\ni\rho$ and
\item $\phi(x,y)\ni\rightarrow$ if and only if $\phi(y,x)\ni\leftarrow$.
\end{enumerate}
Moreover, for all $x\in U$, $\phi(x,x)$ is a nonempty proper subset of $\Pset$. The \textdef{sub-dir-type} of $K$ induced by $W\subseteq U$ is the dir-type achieved by deleting the vertices $U-W$ from $K$.

We say that a directed graph $H=(V,c)$ \textdef{embeds in type $K=(U,\phi)$}, and write $H\mapsto K$, if there is a map $\gamma:V\rightarrow U$ such that, for distinct $u,u'\in U$, $c(v,v')\in\phi(u,u')$ whenever $\gamma(v)=u$ and $\gamma(v')=u'$ and for $u\in U$, the following occurs: (1) if exactly one of $\{\leftarrow,\rightarrow\}$ is in $\phi(u,u)$, then the oriented edges of $\gamma^{-1}(u)$ are a subdigraph of a transitive tournament (2) if neither $\leftarrow$ nor $\rightarrow$ is in $\phi(u,u)$, then $\gamma^{-1}(u)$ has no oriented edges, (3) if $\nonarrow\not\in\phi(u,u)$, then $\gamma^{-1}(u)$ has no nonedges and (4) if $\unarrow\not\in\phi(u,u)$, then $\gamma^{-1}(u)$ has no undirected edges.
\end{defn}

We define the set of types $\K(\hh)$ that we need to consider for this problem.
\begin{defn}
Let $\hh$ be a hereditary property of $r$-graphs [digraphs].  We use the notation $\F(\hh)$ to be the minimal set of $r$-graphs [digraphs] such that $\hh=\bigcap_{H\in\F(\hh)}\forb(H)$, where $\forb(H)$ denotes the property of having no induced copy of $H$.

We also denote $\K(\hh)$ to be the set of all $r$-types [dir-types], $K$, such that $H\not\mapsto K$ for all $H\in\F(\hh)$.
\end{defn}

The $f_K$ function is what we use to compute the edit distance.
\begin{defn}
For an $r$-type, $K=(U,c)$ on $k$ vertices, and a density vector $\p=(p_1,\ldots,p_r)$, we define the function $f_K(\p)$ as follows: For $\rho=1,\ldots,r$, let the matrix ${\bf A}_{\rho}$ be such that the $(i,j)^{\rm th}$ entry is $1$ if $c(u_i,u_j)\ni\rho$ and zero otherwise.  If ${\bf J}$ denotes the $k\times k$ all-ones matrix, ${\bf 1}$ denotes the $k\times 1$ all-ones vector, then
$$ f_K(\p)=\frac{1}{k^2}{\bf 1}^T\left({\bf J}-\sum_{\rho=1}^rp_{\rho}{\bf A}_{\rho}\right){\bf 1} . $$
\end{defn}

The $f_K$ function is defined in a slightly different way for digraphs.
\begin{defn}
For a dir-type, $K=(U,c)$ on $k$ vertices, and a density vector $\p=(p,q)$, we define the function $f_K(\p)$ as follows: For $\rho=\nonarrow,\unarrow$, let the matrix ${\bf A}_{\rho}$ be such that the $(i,j)^{\rm th}$ entry is $1$ if $c(u_i,u_j)\ni\rho$ and zero otherwise.  The matrix ${\bf A}_{\rightarrow}$ has the property that the $(i,j)^{\rm th}$ entry is
$$ \left\{\begin{array}{rl}
      1, & \mbox{if $c(u_i,u_j)$ contains exactly one member of $\{\leftarrow,\rightarrow\}$;} \\
      2, & \mbox{if $c(u_i,u_j)\supseteq\{\leftarrow,\rightarrow\}$; and} \\
      0, & \mbox{otherwise.}
   \end{array}\right. $$
If ${\bf J}$ denotes the $k\times k$ all-ones matrix, ${\bf 1}$ denotes the $k\times 1$ all-ones vector, then
$$ f_K(\p)=\frac{1}{k^2}{\bf 1}^T\left({\bf J}-(1-p-2q){\bf A}_{\nonarrow}-p{\bf A}_{\unarrow}-q{\bf A}_{\rightarrow}\right){\bf 1} . $$
\end{defn}

The entry of $2$ is necessary in order to account for the fact that fewer editing operations are required if both directions are permitted rather than simply one direction.

Finally, some definitions with respect to edit distance:
\begin{defn}
For $r$-graphs [digraphs] $G=(V,c)$ and $G'=(V,c')$ on the same labeled vertex set, the expression $\dist(G,G')$ counts the number of pairs of vertices $v,v'$ such that $c(v,v')\neq c'(v,v')$.

The distance of $G$ from $\hh$ is $\min\{\dist(G,G') : G'\in\hh\}$.
\end{defn}

We need to express the main application differently in the case of $r$-graphs and digraphs.  However, only the $r$-graph version will be proven.

\begin{thm}\label{thm:app:multicolor}
   Let $G'$ be an $r$-graph in hereditary property $\hh=\bigcap_{H\in\F(\hh)}\forb(H)$ and ${\bf p}=(p_1,\ldots,p_r)$ be a probability vector.  Then, there exists an $r$-type $K\in\K(\hh)$ such that $H\not\mapsto K$ for all $H\in\F(\hh)$ and with probability going to $1$ as $n\rightarrow\infty$, $\dist(G_{n,{\bf p}},\hh)\geq f_K({\bf p})\binom{n}{2}-o(n^2)$.
\end{thm}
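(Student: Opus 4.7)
The plan is to lower-bound $\dist(G_{n,\p},\hh)$ by examining a graph $G^*\in\hh$ achieving this minimum, applying Theorem~\ref{thm:main} to $G^*$, and distilling an $r$-type $K\in\K(\hh)$ from the resulting regular equipartition so that ``unacceptable'' colors of $G_{n,\p}$ force many edits. I would first fix a small threshold $\eta>0$ (tending to $0$ slowly with $n$), choose $\E\colon\N\to(0,1)$ with $\E(\kappa)\leq\gamma_{\ref{lem:embed:multicol}}(\eta/2,\kappa)$ and $\E(0)\leq\eta^2$, and take $m=m(n)\to\infty$. Theorem~\ref{thm:main} then produces equipartitions $\A=\{V_i\}_{i=1}^k$ and $\A'=\{V_i'\}_{i=1}^k$ with $V_i'\subseteq V_i$, every pair in $\A'$ being $\E(k)$-regular, and all but $\E(0)\binom{k}{2}$ of the pair-densities in $\A'$ lying within $\E(0)$ of those in $\A$.

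From $\A'$ I would build the candidate $K=(\{1,\ldots,k\},\phi)$ by placing $\rho\in\phi(i,j)$ iff $d_\rho(V_i',V_j')\geq\eta$, with $\phi(i,i)$ defined analogously from dominant within-cluster colors. To verify $K\in\K(\hh)$, I would argue by contradiction: if some $H\in\F(\hh)$ satisfies $H\mapsto K$ via a map $\gamma$, use Lemma~\ref{lem:slicing:multicol} to slice each $V_i'$ into $|\gamma^{-1}(i)|$ approximately equal subparts. Pairs of subparts from distinct clusters remain regular (with regularity constant inflated by a factor depending on $|V(H)|$), and the density of each color required by $H$ stays above $\eta-O(\E(k))\geq\eta/2$. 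Applying Lemma~\ref{lem:embed:multicol} then produces an induced copy of $H$ in $G^*$, contradicting $G^*\in\hh$.

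With $K\in\K(\hh)$ in hand, the edit bound follows from counting and concentration. For each non-exceptional pair $(V_i,V_j)$, $i<j$, a color $\rho\notin\phi(i,j)$ has density below $\eta$ in $(V_i',V_j')$ and hence below $\eta+\E(0)$ in $(V_i,V_j)$, so $G^*$ uses colors from $\phi(i,j)$ on all but $O((\eta+\E(0))n^2)$ pairs overall. Therefore
\begin{equation*}
\dist(G_{n,\p},G^*)\;\geq\;N\;-\;O\bigl((\eta+\E(0))n^2\bigr),
\end{equation*}
where $N$ counts unordered pairs $\{v,w\}$ with $v\in V_i$, $w\in V_j$, $i<j$, whose color in $G_{n,\p}$ lies outside $\phi(i,j)$. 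Each summand of $N$ is an independent Bernoulli with mean $1-\sum_{\rho\in\phi(i,j)}p_\rho$, so Chernoff gives $N\geq\mathbb{E}[N]-o(n^2)$ with probability $1-o(1)$; a direct calculation using $|V_i|\approx n/k$ converts $\mathbb{E}[N]$ into $f_K(\p)\binom{n}{2}-O(n^2/k)$. Since $\eta\to 0$ and $k\to\infty$ are chosen slowly in $n$, all error terms are $o(n^2)$.

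The hardest step is the joint treatment of $\phi(i,i)$ and non-injective embeddings $\gamma$, because Theorem~\ref{thm:main} provides no direct control over densities internal to a single cluster $V_i'$. I would address this by iterating Theorem~\ref{thm:reglem:multicol} inside each $V_i'$ at a tighter regularity parameter, producing an internally regular sub-partition from which $\phi(i,i)$ is defined via the dominant within-cluster colors; the slicing argument in the Embedding Lemma step then applies to the refined structure. A secondary issue is that the extracted $K$ depends on the realization of $G_{n,\p}$ (and on $n$ through $\eta$ and $m$); since $|V(K)|=k\leq S_{\ref{thm:main}}(r,m,\E)$ is bounded for each choice of parameters, a compactness or pigeonhole argument selects a single $K$ for which the asserted inequality holds with probability tending to $1$.
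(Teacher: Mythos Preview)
Your approach is essentially the paper's: apply Theorem~\ref{thm:main} to a closest $G^*\in\hh$, read off $\phi(i,j)$ from the $\eta$-dense colors in $\A'$, show $K\in\K(\hh)$ by an embedding contradiction, and then count edits across the coarse partition $\A$ using Chernoff. The bookkeeping, use of the Slicing/Embedding Lemmas, and the final $O(\epsilon n^2)$ error analysis match the paper closely.

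The one place where your sketch is genuinely incomplete is the definition and use of $\phi(i,i)$. Saying ``dominant within-cluster colors'' is not enough: after regularizing $V_i'$ you obtain many miniclusters whose pairwise majority colors can be completely inconsistent, so there is no single $\phi(i,i)\subsetneq\{1,\ldots,r\}$ that simultaneously supports an embedding of an arbitrary number of $H$-vertices mapped to $i$. The paper fills this gap with a Ramsey step: after applying Theorem~\ref{thm:reglem:multicol} inside $V_i'$, color each regular pair of miniclusters by a color of density at least $1/r$, and invoke Ramsey to extract a monochromatic clique of miniclusters of size $\max_{H\in\F(\hh)}|V(H)|$; the common color becomes $\phi(i,i)$. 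Without this Ramsey selection your slicing argument for non-injective $\gamma$ cannot guarantee that pairs of subparts coming from the \emph{same} cluster have density $\geq\eta$ in the required color, and the Embedding Lemma step breaks down exactly there. Once you insert this Ramsey ingredient, your outline is the paper's proof.
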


\begin{thm}\label{thm:app:digraph}
   Let $G'$ be a digraph in hereditary property $\hh=\bigcap_{H\in\F(\hh)}\forb(H)$ and ${\bf p}=(p,q)$ be a probability vector.  Then, there exists an dir-type $K\in\K(\hh)$ such that $H\not\mapsto K$ for all $H\in\F(\hh)$ and with probability going to $1$ as $n\rightarrow\infty$, $\dist(G_{n,{\bf p}},\hh)\geq f_K({\bf p})\binom{n}{2}-o(n^2)$.
\end{thm}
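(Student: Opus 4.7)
The plan is to adapt the Alon--Stav/AKM lower-bound strategy to digraphs using Theorem~\ref{thm:main}. Let $G = G_{n,\p}$ and let $G^* \in \hh$ attain $\dist(G,\hh) = \dist(G,G^*)$. The proof has three steps: extract a dir-type $K = (U,\phi)$ from the cluster-density structure of $G^*$ given by the strong regularity lemma; verify $K \in \K(\hh)$ using Lemma~\ref{lem:embed:digraph}; and lower-bound $\dist(G,G^*)$ by $f_K(\p)\binom{n}{2} - o(n^2)$ via concentration of $G_{n,\p}$ against the (now fixed) cluster structure of $G^*$.

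Fix a small threshold $\eta > 0$ (to be sent to $0$ at the end) and set $\E(\kappa) = \min\{\eta, \gamma_{\ref{lem:embed:digraph}}(\eta,\kappa)\}$ together with a large starting order $m$. Applying Theorem~\ref{thm:main} to $G^*$ with these parameters yields an equipartition $\A = \{V_i : 1 \le i \le k\}$, subsets $V_i' \subseteq V_i$ with $|V_i'| \ge \delta n$ such that every pair $(V_i',V_{i'}')$ is $\E(k)$-regular in $G^*$, and $\|\bfd(V_i,V_{i'}) - \bfd(V_i',V_{i'}')\|_\infty < \eta$ for all but $\eta\binom{k}{2}$ cross pairs. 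For $i \ne i'$ set $\phi(u_i,u_{i'}) = \Pset \cap \{\rho \in \Aset : d_\rho(V_i',V_{i'}') \ge \eta\}$; for $\phi(u_i,u_i)$, apply Ramsey's theorem to the $4$-coloring of $V_i'$ induced by $G^*$ and include those $\rho \in \Pset$ for which arbitrarily large $\rho$-homogeneous substructures exist inside $V_i'$, removing one element if necessary to force $\phi(u_i,u_i)$ to be a proper nonempty subset of $\Pset$.

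To show $K \in \K(\hh)$, suppose some $H \in \F(\hh)$ admits an embedding $\gamma : V(H) \to U$. Split each $V_i'$ into $|\gamma^{-1}(u_i)|$ sub-clusters of linear size, and inside each choose a Ramsey-homogeneous subset realizing the internal structure prescribed by $\phi(u_i,u_i)$ (an independent set, a biedge-clique, or a transitive tournament, as needed). The Slicing Lemma (Lemma~\ref{lem:slicing:digraph}) keeps each cross pair $(V_i'(v), V_{i'}'(v'))$ regular (with a constant-factor loss) and preserves the density of the color $c(v,v')$ at $\ge \eta - O(\E(k))$. Lemma~\ref{lem:embed:digraph}, applied with parameters $\eta/2$ and $|V(H)|$, then produces a positive density of induced copies of $H$ inside $G^*$, contradicting $G^* \in \forb(H)$.

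For the distance, condition on $G^*$. For each cross pair,
\[
\dist\bigl(G|_{V_i \times V_{i'}},G^*|_{V_i \times V_{i'}}\bigr) = |V_i||V_{i'}| - \sum_\rho \bigl|\{(u,v) : c_G(u,v) = c_{G^*}(u,v) = \rho\}\bigr|,
\]
whose second sum is $\sum_\rho p_\rho d_\rho(V_i,V_{i'}; G^*) \cdot |V_i||V_{i'}| \pm o(n^2/k^2)$ by Chernoff applied per cluster pair, uniformly over the $e^{O(n \log k)}$ possible equipartitions (the union bound is easily absorbed by the $e^{-\Theta(n^2/k^2)}$ tail). Since $d_\rho(V_i,V_{i'}; G^*) < O(\eta)$ whenever $\rho \notin \phi(u_i,u_{i'})$, summing over $i < i'$ gives
\[
\dist(G,G^*) \ge \sum_{i<i'} |V_i||V_{i'}| \Bigl(1 - \sum_{\rho \in \phi(u_i,u_{i'})} p_\rho\Bigr) - O(\eta)n^2 = f_K(\p)\binom{n}{2} - o(n^2),
\]
the diagonal terms of $f_K$ contributing $O(n^2/k)$, absorbed by taking $k$ large (via $m$) and $\eta \to 0$ slowly with $n$. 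The main obstacle is the bookkeeping for $\phi(u_i,u_i)$: one must simultaneously respect the palette $\Pset$, enforce properness of $\phi(u_i,u_i)$, and ensure that the Ramsey-homogeneous substructures chosen in the sub-clusters $V_i'(v)$ remain compatible with the prescribed cross-cluster densities, which is handled by a further regularization inside each $V_i'$. The $r$-graph case (Theorem~\ref{thm:app:multicolor}) proceeds identically, minus the internal-structure subtleties of the diagonal entries.
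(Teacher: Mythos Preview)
Your proposal is essentially the paper's proof: apply Theorem~\ref{thm:main} to the nearest $G^*\in\hh$, read off the cross part of the dir-type from the densities $d_\rho(V_i',V_{i'}')$, certify $K\in\K(\hh)$ via inner regularisation plus Ramsey plus slicing/embedding, and finish with a Chernoff union bound over equipartitions. The one place your write-up drifts is the definition of $\phi(u_i,u_i)$: taking ``all $\rho$ with arbitrarily large homogeneous substructures in $V_i'$, then deleting one colour if needed'' does not by itself give you linear-size miniclusters on which Lemma~\ref{lem:slicing:digraph} can act, and an arbitrary deletion could kill exactly the colour your miniclusters realise---but your own closing remark (regularise inside each $V_i'$ and apply Ramsey to the reduced graph of miniclusters, \emph{then} set $\phi(u_i,u_i)$ to be the resulting colour) is precisely how the paper handles it and repairs the argument.
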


\begin{proof}
Fix $\eta\gg\delta\gg\epsilon>0$.  Let $G$ be distributed according to $G_{n,{\bf p}}$ and $G'\in\hh$ be a graph of distance $\dist(G,\hh)$ from $G$.  Apply Theorem~\ref{thm:main} with $m=\epsilon^{-1}$ and any decreasing function $\E$ for which $\E(0)=\epsilon$ to $G'$ and consider the partition $\A'=(V_1',\ldots,V_k')$. Construct the $r$-type [dir-type] $K_0=(U,c_0)$ on vertex set $U=\{u_1,\ldots,u_k\}$ as follows.  For distinct $i,j$, $c_0(u_i,u_j)\ni\rho$ if and only if the pair $(V_i',V_j')$ is $\E(k)$-regular such that the color $\rho$ occurs with density at least $\delta$.

Now, we shall define $c_0$ on the vertices; i.e., $c_0(u_i,u_i)$, $u_i\in U$, such that $K_0\in\K(\hh)$. Assume no such assignment to the vertices exists; i.e., for any choice of colors of the vertices, there exists an $H\in\F(\hh)$ for which $H\mapsto K_0$.  Apply the regularity lemma (Theorem~\ref{thm:reglem:multicol} in the $r$-graph case or Theorem~\ref{thm:reglem:digraph} in the digraph case) to each of the clusters $V_i'$ and use Ramsey theory find a clique of miniclusters that are regular with positive density in the same color.  Assign that color to $u_i$ to complete the definition of $K_0$.  Using the relevant slicing and embedding lemmas, we see that if $H\mapsto K_0$, then there is an induced copy of $H$ in $G'$, a contradiction. (See the authors and K\'ezdy~\cite{AKM} for details in the graph case.)

As to counting the number of changes, for all distinct $i<i'$, it is the case that $d_{G',\rho}(V_i,V_{i'})=0$ for all $\rho\not\in c_0(v_i,v_i')$.  By Theorem~\ref{thm:main}, we can look at the equipartition $\A$ and see that for all but $\E(0)k^2$ such pairs, $d_{G',\rho}(V_i,V_{i'})\leq\E(0)$ for all $\rho\not\in c_0(u_i,u_i')$.  Now consider the equipartition $\A$ as applied to $G$.  We see that
\begin{eqnarray*}
   \dist(G,G') & \geq & \sum_{1\leq i<i'\leq k}\sum_{\rho\not\in c_0(u_i,u_{i'})}\left(d_{G,\rho}(V_i,V_{i'})-\E(0)\right)|V_i||V_{i'}| -\E(0)k^2\left\lceil\frac{n}{k}\right\rceil^2 \\
   & \geq & \sum_{1\leq i<i'\leq k}\sum_{\rho\not\in c_0(u_i,u_{i'})}d_{G,\rho}(V_i,V_{i'})\left\lfloor\frac{n}{k}\right\rfloor^2 -\E(0)r\binom{k}{2}\left\lceil\frac{n}{k}\right\rceil^2 -\E(0)k^2\left\lceil\frac{n}{k}\right\rceil^2 .
\end{eqnarray*}

A routine Chernoff bound computation shows that, since $k$ is bounded, if $1\leq i<i'\leq k$, then the probability that $d_{G,\rho}(V_i,V_{i'})<p_{\rho}-\lfloor n/k\rfloor^{-1/3}$ is at most $\exp\left\{-2\lfloor n/k\rfloor^{1/3}\right\}$.  Given an equipartition of $V$ of order $k$, the probability that there exists some pair $(V_i,V_{i'})$ and some $\rho\in\{1,\ldots,r\}$ such that $d_{G,\rho}(V_i,V_{i'})<p_{\rho}-\lfloor n/k\rfloor^{-1/3}$ is at most $r\binom{k}{2}\exp\left\{-2\lfloor n/k\rfloor^{1/3}\right\}$.  The number of equipartitions, disregarding the labeling of the vertices, is bounded by a function of $S=\Smain\left(r,\E(0)^{-1},\E\right)$.  Hence, the probability of having an equipartition with one such pair is $O\left(\exp\left\{-2(n/S)^{1/3}\right\}\right)$.

So, with that probability, and the fact that $\E(0)=\epsilon$,
\begin{eqnarray}
   \dist(G,G') & \geq & \sum_{1\leq i<i'\leq k}\sum_{\rho\not\in c_0(u_i,u_{i'})}p_{\rho}\left\lfloor\frac{n}{k}\right\rfloor^2 -\left\lfloor\frac{n}{k}\right\rfloor^{5/3}r\binom{k}{2} -\epsilon r\binom{k}{2}\left\lceil\frac{n}{k}\right\rceil^2 -\epsilon k^2\left\lceil\frac{n}{k}\right\rceil^2 \nonumber \\
   & = & \frac{1}{2}\sum_{\scriptsize \begin{array}{c} 1\leq i,i'\leq k \\ i\neq i' \end{array}}\sum_{\rho\not\in c_0(u_i,u_{i'})}p_{\rho}\left\lfloor\frac{n}{k}\right\rfloor^2 -\left\lfloor\frac{n}{k}\right\rfloor^{5/3}r\binom{k}{2} -\epsilon r\binom{k}{2}\left\lceil\frac{n}{k}\right\rceil^2 -\epsilon k^2\left\lceil\frac{n}{k}\right\rceil^2 \nonumber \\
   & \geq & \frac{1}{2}\sum_{1\leq i,i'\leq k}\sum_{\rho\not\in c_0(u_i,u_{i'})}p_{\rho}\left\lfloor\frac{n}{k}\right\rfloor^2 -\frac{k}{2}\left\lfloor\frac{n}{k}\right\rfloor^2 -r\binom{k}{2}\left\lfloor\frac{n}{k}\right\rfloor^{5/3} -\epsilon r\binom{k}{2}\left\lceil\frac{n}{k}\right\rceil^2 -\epsilon k^2\left\lceil\frac{n}{k}\right\rceil^2 \nonumber \\
   & = & f_K(\p)\frac{k^2}{2}\left\lfloor\frac{n}{k}\right\rfloor^2 -\frac{k}{2}\left\lfloor\frac{n}{k}\right\rfloor^2 -r\binom{k}{2}\left\lfloor\frac{n}{k}\right\rfloor^{5/3} -\epsilon r\binom{k}{2}\left\lceil\frac{n}{k}\right\rceil^2 -\epsilon k^2\left\lceil\frac{n}{k}\right\rceil^2 \nonumber \\
   & \geq & f_K(\p)\binom{n}{2} \nonumber \\
   & & -\left[\left(\binom{n}{2}-\frac{k^2}{2}\left\lfloor\frac{n}{k}\right\rfloor^2\right) +\frac{k}{2}\left\lfloor\frac{n}{k}\right\rfloor^2 +r\binom{k}{2}\left\lfloor\frac{n}{k}\right\rfloor^{5/3} +\epsilon r\binom{k}{2}\left\lceil\frac{n}{k}\right\rceil^2 +\epsilon k^2\left\lceil\frac{n}{k}\right\rceil^2\right] . \label{eq:error}
\end{eqnarray}

Since $k\geq m=\epsilon^{-1}$, we can see that the error term in (\ref{eq:error}) is $O\left(r\epsilon n^2\right)$.  So, for any $\eta>0$, the probability that $\dist(G_{n,\p},\hh)\geq f_K(\p)\binom{n}{2}-\eta n^2$ goes to 1 as $n\rightarrow\infty$.
\end{proof}

\bibliographystyle{plain}

\end{document}